\definecolor{webgreen}{rgb}{0,.5,0}
\definecolor{webbrown}{rgb}{.6,0,0}
\theoremstyle{plain}
\newtheorem{thm}{Theorem}
\newtheorem{lemma}[thm]{Lemma}
\newtheorem{idn}[thm]{Identity}
\newtheorem{cor}[thm]{Corollary}
\theoremstyle{definition}
\newtheorem{example}[thm]{Example}
\newtheorem{remark}[thm]{Remark}
\newtheorem{conj}[thm]{Conjecture}
\newcommand{\Nset}{\mathbb{N}}
\newcommand{\rb}[1]{_{\rm #1}}
\newcommand{\ch}[2]{\begin{bmatrix}#1\\ #2\end{bmatrix}}
\newcommand{\tch}[2]{[\begin{smallmatrix}#1\\ #2\end{smallmatrix}]}
\newcommand{\chb}[2]{\left\langle\begin{matrix}#1\\#2\end{matrix}\right\rangle}
\newcommand{\tchb}[2]{\langle\begin{smallmatrix}#1\\#2\end{smallmatrix}\rangle}
\newcommand{\floor}[1]{\lfloor#1\rfloor}
\newcommand{\seqnum}[1]{\href{https://oeis.org/#1}{\underline{#1}}}
\begin{document}

\begin{center}
\vskip 1cm{\LARGE\bf
On Two Families of Generalizations \\ \vskip .08in
 of Pascal's Triangle}
\vskip 1cm
\large
Michael A. Allen\footnote{Corresponding author.} and Kenneth Edwards\\
Physics Department\\
Faculty of Science\\
Mahidol University\\
Rama 6 Road\\
Bangkok 10400  \\
Thailand \\
\href{mailto:maa5652@gmail.com}{\tt maa5652@gmail.com} \\
\href{mailto:kenneth.edw@mahidol.ac.th}{\tt kenneth.edw@mahidol.ac.th} \\
\end{center}

\vskip .2 in

\begin{abstract}
We consider two families of Pascal-like triangles that have all ones
on the left side and ones separated by $m-1$ zeros on the right
side. The $m=1$ cases are Pascal's triangle and the two families also
coincide when $m=2$. Members of the first family obey Pascal's
recurrence everywhere inside the triangle. We show that the $m$-th
triangle can also be obtained by reversing the elements up to and
including the main diagonal in each row of the $(1/(1-x^m),x/(1-x))$
Riordan array.  Properties of this family of triangles can be
obtained quickly as a result.  The $(n,k)$-th entry in the $m$-th
member of the second family of triangles is the number of tilings of
an $(n+k)\times1$ board that use $k$ $(1,m-1)$-fences and $n-k$ unit
squares.  A $(1,g)$-fence is composed of two unit square sub-tiles
separated by a gap of width $g$. We show that the entries in the
antidiagonals of these triangles are coefficients of products of
powers of two consecutive Fibonacci polynomials and give a bijective
proof that these coefficients give the number of $k$-subsets of
$\{1,2,\ldots,n-m\}$ such that no two elements of a subset differ by
$m$.  Other properties of the second family of triangles are also
obtained via a combinatorial approach.  Finally, we give necessary and
sufficient conditions for any Pascal-like triangle (or its
row-reversed version) derived from tiling $(n\times1)$-boards to be a Riordan
array.
\end{abstract}

\section{Introduction}

Pascal's triangle, whose entries $\tbinom{n}{k}$ satisfy Pascal's
recurrence,
\[
\binom{n}{k}=\binom{n-1}{k}+\binom{n-1}{k-1},
\]
is also the $(1-x,x/(1-x))$ Riordan array. A \textit{$(p(x),q(x))$ Riordan
array}, where $p(x)=p_0+p_1x+p_2x^2+\cdots$ and
$q(x)=q_1x+q_2x^2+\cdots$, is an infinite lower triangular matrix
whose $(n,k)$-th entry (where $n\ge0,k\ge0$) is denoted and defined by
$(p(x),q(x))_{n,k}=[x^n]p(x)(q(x))^k$, where the coefficient operator
$[x^n]$ gives the coefficient of $x^n$ in the series expansion of the
term it precedes \cite{SGWW91,Bar=16}. We define the
\textit{row-reversed $(p,q)$ Riordan array} as the lower triangular
matrix obtained by reversing the elements of each row up to and
including the main diagonal of the $(p,q)$ Riordan array, i.e., the
$(n,k)$-th element of the row-reversed $(p,q)$ Riordan array is
$(p(x),q(x))_{n,n-k}$.  Notice that whereas the definition of a
$(p,q)$ Riordan array implies that the 0th column of the array gives
the coefficients of the generating function of $p(x)$, it is the main
diagonal of the row-reversed Riordan array where these coefficients
appear.  Owing to the symmetry of its rows, Pascal's triangle is also
the row-reversed $(1-x,x/(1-x))$ Riordan array. Note, however, that in
general, a row-reversed Riordan array is not a Riordan array.

Pascal's triangle also has tiling interpretations. The $(n,k)$-th
entry (when written as a lower triangular matrix with the first 1
taken as the $(0,0)$-th entry) is the number of square-and-domino
tilings of $N$-boards for any $N$ (an $N$-board is a linear array of
$N$ unit square cells) that use $n$ tiles in total of which $k$ are
dominoes (and therefore $n-k$ are squares). This is easily seen since
there are $\tbinom{n}{k}$ ways to choose which $k$ of the $n$ tiles
are dominoes. As there are $2^n$ different possible square-and-domino
$n$-tile tilings, one immediately has a combinatorial proof that
$\sum_{k=0}^n\tbinom{n}{k}=2^n$ \cite{BQ=03}.  Also, the $k$-th entry
in the $n$-th antidiagonal (i.e., $\tbinom{n-k}{k}$) is the number of
tilings of an $n$-board that use $k$ dominoes and $n-2k$ squares. The
number of ways to tile an $n$-board using squares and dominoes is the
Fibonacci number $f_n$ given by $f_n=f_{n-1}+f_{n-2}+\delta_{n,0}$,
$f_{n<0}=0$ (\seqnum{A000045}(n+1) in the OEIS \cite{Slo-OEIS}), where
$\delta_{i,j}$ is 1 if $i=j$ and zero otherwise. This is one way to
show that the sum of elements of the $n$-th antidiagonal of Pascal's
triangle is $f_n$ \cite{BQ=03}.

A \textit{$(w,g)$-fence} is a tile composed of two sub-tiles (called
\textit{posts}) of dimensions $w\times1$ which are separated by a gap
of width $g$.  Fences have been used to give tiling interpretations of
various sequences \cite{Edw08,EA19,EA20a,EA21}. They have also
been employed in combinatorial proofs relating to strongly restricted
permutations \cite{EA15} and a probability problem \cite{DEM21}.

\begin{figure}
\begin{tabular}{c|*{13}{p{1.6em}}}
$n$ $\backslash$ $k$&\mbox{}\hfill0&\mbox{}\hfill1&\mbox{}\hfill2&\mbox{}\hfill3&\mbox{}\hfill4&\mbox{}\hfill5&\mbox{}\hfill6&\mbox{}\hfill7&\mbox{}\hfill8&\mbox{}\hfill9&\mbox{}\hfill10&\mbox{}\hfill11&\mbox{}\hfill12\\\hline
 0~~&\mbox{}\hspace*{\fill}\textbf{  1}\\
 1~~&\mbox{}\hspace*{\fill}\textbf{  1}&\mbox{}\hspace*{\fill}\textbf{  0}\\
 2~~&\mbox{}\hspace*{\fill}\textbf{  1}&\mbox{}\hspace*{\fill}\textbf{  1}&\mbox{}\hspace*{\fill}\textbf{  1}\\
 3~~&\mbox{}\hspace*{\fill}\textbf{  1}&\mbox{}\hspace*{\fill}\textbf{  2}&\mbox{}\hspace*{\fill}\textbf{  2}&\mbox{}\hspace*{\fill}\textbf{  0}\\
 4~~&\mbox{}\hspace*{\fill}\textbf{  1}&\mbox{}\hspace*{\fill}\textbf{  3}&\mbox{}\hspace*{\fill}\textbf{  4}&\mbox{}\hspace*{\fill}\textbf{  2}&\mbox{}\hspace*{\fill}\textbf{  1}\\
 5~~&\mbox{}\hspace*{\fill}\textbf{  1}&\mbox{}\hspace*{\fill}\textbf{  4}&\mbox{}\hfill  7 &\mbox{}\hspace*{\fill}\textbf{  6}&\mbox{}\hspace*{\fill}\textbf{  3}&\mbox{}\hspace*{\fill}\textbf{  0}\\
 6~~&\mbox{}\hspace*{\fill}\textbf{  1}&\mbox{}\hspace*{\fill}\textbf{  5}&\mbox{}\hfill 11 &\mbox{}\hfill 13 &\mbox{}\hspace*{\fill}\textbf{  9}&\mbox{}\hspace*{\fill}\textbf{  3}&\mbox{}\hspace*{\fill}\textbf{  1}\\
 7~~&\mbox{}\hspace*{\fill}\textbf{  1}&\mbox{}\hspace*{\fill}\textbf{  6}&\mbox{}\hfill 16 &\mbox{}\hfill 24 &\mbox{}\hfill 22 &\mbox{}\hspace*{\fill}\textbf{ 12}&\mbox{}\hspace*{\fill}\textbf{  4}&\mbox{}\hspace*{\fill}\textbf{  0}\\
 8~~&\mbox{}\hspace*{\fill}\textbf{  1}&\mbox{}\hspace*{\fill}\textbf{  7}&\mbox{}\hfill 22 &\mbox{}\hfill 40 &\mbox{}\hfill 46 &\mbox{}\hfill 34 &\mbox{}\hspace*{\fill}\textbf{ 16}&\mbox{}\hspace*{\fill}\textbf{  4}&\mbox{}\hspace*{\fill}\textbf{  1}\\
 9~~&\mbox{}\hspace*{\fill}\textbf{  1}&\mbox{}\hspace*{\fill}\textbf{  8}&\mbox{}\hfill 29 &\mbox{}\hfill 62 &\mbox{}\hfill 86 &\mbox{}\hfill 80 &\mbox{}\hfill 50 &\mbox{}\hspace*{\fill}\textbf{ 20}&\mbox{}\hspace*{\fill}\textbf{  5}&\mbox{}\hspace*{\fill}\textbf{  0}\\
10~~&\mbox{}\hspace*{\fill}\textbf{  1}&\mbox{}\hspace*{\fill}\textbf{  9}&\mbox{}\hfill 37 &\mbox{}\hfill 91 &\mbox{}\hfill148 &\mbox{}\hfill166 &\mbox{}\hfill130 &\mbox{}\hfill 70 &\mbox{}\hspace*{\fill}\textbf{ 25}&\mbox{}\hspace*{\fill}\textbf{  5}&\mbox{}\hspace*{\fill}\textbf{  1}\\
11~~&\mbox{}\hspace*{\fill}\textbf{  1}&\mbox{}\hspace*{\fill}\textbf{ 10}&\mbox{}\hfill 46 &\mbox{}\hfill128 &\mbox{}\hfill239 &\mbox{}\hfill314 &\mbox{}\hfill296 &\mbox{}\hfill200 &\mbox{}\hfill 95 &\mbox{}\hspace*{\fill}\textbf{ 30}&\mbox{}\hspace*{\fill}\textbf{  6}&\mbox{}\hspace*{\fill}\textbf{  0}\\
12~~&\mbox{}\hspace*{\fill}\textbf{  1}&\mbox{}\hspace*{\fill}\textbf{ 11}&\mbox{}\hfill 56 &\mbox{}\hfill174 &\mbox{}\hfill367 &\mbox{}\hfill553 &\mbox{}\hfill610 &\mbox{}\hfill496 &\mbox{}\hfill295 &\mbox{}\hfill125 &\mbox{}\hspace*{\fill}\textbf{ 36}&\mbox{}\hspace*{\fill}\textbf{  6}&\mbox{}\hspace*{\fill}\textbf{  1}\\
\end{tabular}
\caption{A Pascal-like triangle (\seqnum{A059259}) whose $(n,k)$-th entry,
  $\protect\binom{n}{k}_2=\protect\tchb{n}{k}_2$, is the $(n,k)$-th
  element of the row-reversed $(1/(1-x^2),x/(1-x))$ Riordan array and
  also the number of $n$-tile tilings using $k$ $(1,1)$-fences (and
  $n-k$ squares). Entries in bold font (and those in bold font in
  Figs. \ref{f:m=3t} and \ref{f:m=4t}) are covered by identities in \S\ref{s:idn}. }
\label{f:m=2}
\end{figure}

A $(1,m-1)$-fence for $m=2,3,\dots$ can be regarded as a generalization
of a domino since the $m=1$ case is effectively a domino. If one
creates a Pascal-like triangle by specifying that the $(n,k)$-th entry
is the number of $n$-tile tilings of a board that use $k$
$(1,1)$-fences and $n-k$ squares one arrives at the triangle
\seqnum{A059259} whose entries satisfy Pascal's recurrence
\cite{EA21} (Fig.~\ref{f:m=2}). The triangle is also the row-reversed
$(1/(1-x^2),x/(1-x))$ Riordan array. One naturally asks whether the
triangle generated by tiling boards with $(1,m-1)$-fences and squares
for a given fixed $m$ is the row-reversed $(1/(1-x^m),1/(1-x))$
array. The answer for $m>2$, as we will show here, is no, and so we obtain
two separate families of triangles which only coincide for the
$m=1,2$ cases. However, one feature that the families have in common
is their sides: the left sides are all ones and the right side of the
$m$-th member of each family is the repetition of 1 followed by $m-1$
zeros.

Our main concern here are triangles generated from tiling with squares
and $(1,m-1)$-fences. However, for completeness, in \S\ref{s:fam1} we
look at triangles with the sides specified above that obey Pascal's
recurrence everywhere in the interior. These turn out to be the same
as the row-reversed $(1/(1-x^m),x/(1-x))$ Riordan arrays. The start of
the section also serves as an introduction to \S\ref{s:rio} where we
discuss in general which tiling-derived triangles can be Riordan
arrays or their row-reversed versions, and the remainder of
\S\ref{s:fam1} gives us the opportunity to illustrate how to obtain
generating functions for sums of antidiagonals and bivariate
generating functions for row-reversed Riordan arrays which does not
seem to have been addressed elsewhere in the literature. In
\S\ref{s:fam2} we introduce the tiling-derived family of triangles
along with a closely related family which is helpful in proving some
of the properties of the triangles; the rows of the latter family are
the antidiagonals of the former. The tiling-derived triangles are
shown to be related to Fibonacci polynomials and restricted
combinations in \S\ref{s:poly} and \S\ref{s:comb}, respectively. In
\S\ref{s:meta} we give general properties of tiling with squares and
$(1,m-1)$-fences which are used in the proofs of identities in
\S\ref{s:idn}.

\section{Triangles obtained from Pascal's recurrence}\label{s:fam1}

We denote the $(n,k)$-th entry of the $m$th member of our first family
of generalizations of Pascal's triangle by $\tbinom{n}{k}_m$. By
definition, we require that for
$n\ge0$, $\tbinom{n}{0}_m=1$ and $\tbinom{n}{n}_m=\delta_{n\bmod
  m,0}$, and that inside the triangle we have
\begin{equation}\label{e:PR} 
\binom{n}{k}_m=\binom{n-1}{k}_m+\binom{n-1}{k-1}_m, \quad 0<k<n.
\end{equation} 
For $m=1,\ldots,5$ the triangles are Pascal's triangle (\seqnum{A007318}),
\seqnum{A059259}, \seqnum{A118923}, \seqnum{A349839}, and
\seqnum{A349841}, respectively. The $m=2,3,4$ cases are displayed in
Figs.~\ref{f:m=2}, \ref{f:m=3}, and \ref{f:m=4}, respectively.

\begin{figure}
\begin{tabular}{c|*{14}{p{1.5em}}}
$n$ $\backslash$ $k$&\mbox{}\hfill0&\mbox{}\hfill1&\mbox{}\hfill2&\mbox{}\hfill3&\mbox{}\hfill4&\mbox{}\hfill5&\mbox{}\hfill6&\mbox{}\hfill7&\mbox{}\hfill8&\mbox{}\hfill9&\mbox{}\hfill10&\mbox{}\hfill11&\mbox{}\hfill12\\\hline
 0~~&\mbox{}\hfill  1 \\
 1~~&\mbox{}\hfill  1 &\mbox{}\hfill  0 \\
 2~~&\mbox{}\hfill  1 &\mbox{}\hfill  1 &\mbox{}\hfill  0 \\
 3~~&\mbox{}\hfill  1 &\mbox{}\hfill  2 &\mbox{}\hfill  1 &\mbox{}\hfill  1 \\
 4~~&\mbox{}\hfill  1 &\mbox{}\hfill  3 &\mbox{}\hfill  3 &\mbox{}\hfill  2 &\mbox{}\hfill  0 \\
 5~~&\mbox{}\hfill  1 &\mbox{}\hfill  4 &\mbox{}\hfill  6 &\mbox{}\hfill  5 &\mbox{}\hfill  2 &\mbox{}\hfill  0 \\
 6~~&\mbox{}\hfill  1 &\mbox{}\hfill  5 &\mbox{}\hfill 10 &\mbox{}\hfill 11 &\mbox{}\hfill  7 &\mbox{}\hfill  2 &\mbox{}\hfill  1 \\
 7~~&\mbox{}\hfill  1 &\mbox{}\hfill  6 &\mbox{}\hfill 15 &\mbox{}\hfill 21 &\mbox{}\hfill 18 &\mbox{}\hfill  9 &\mbox{}\hfill  3 &\mbox{}\hfill  0 \\
 8~~&\mbox{}\hfill  1 &\mbox{}\hfill  7 &\mbox{}\hfill 21 &\mbox{}\hfill 36 &\mbox{}\hfill 39 &\mbox{}\hfill 27 &\mbox{}\hfill 12 &\mbox{}\hfill  3 &\mbox{}\hfill  0 \\
 9~~&\mbox{}\hfill  1 &\mbox{}\hfill  8 &\mbox{}\hfill 28 &\mbox{}\hfill 57 &\mbox{}\hfill 75 &\mbox{}\hfill 66 &\mbox{}\hfill 39 &\mbox{}\hfill  15 &\mbox{}\hfill  3 &\mbox{}\hfill  1 \\
10~~&\mbox{}\hfill  1 &\mbox{}\hfill  9 &\mbox{}\hfill 36 &\mbox{}\hfill 85 &\mbox{}\hfill132 &\mbox{}\hfill141 &\mbox{}\hfill105 &\mbox{}\hfill 54 &\mbox{}\hfill 18 &\mbox{}\hfill  4 &\mbox{}\hfill  0 \\
11~~&\mbox{}\hfill  1 &\mbox{}\hfill  10 &\mbox{}\hfill 45 &\mbox{}\hfill 121 &\mbox{}\hfill217 &\mbox{}\hfill273 &\mbox{}\hfill246 &\mbox{}\hfill159 &\mbox{}\hfill 72 &\mbox{}\hfill 22 &\mbox{}\hfill  4 &\mbox{}\hfill  0 \\
12~~&\mbox{}\hfill  1 &\mbox{}\hfill  11 &\mbox{}\hfill 55 &\mbox{}\hfill166 &\mbox{}\hfill338 &\mbox{}\hfill490 &\mbox{}\hfill519 &\mbox{}\hfill405 &\mbox{}\hfill231 &\mbox{}\hfill 94 &\mbox{}\hfill 26 &\mbox{}\hfill  4 &\mbox{}\hfill  1 \\
\end{tabular}
\caption{A Pascal-like triangle (\seqnum{A118923}) with entries
  $\protect\binom{n}{k}_3$.}
\label{f:m=3}
\end{figure}

\begin{figure}
\begin{tabular}{c|*{14}{p{1.5em}}}
$n$ $\backslash$ $k$&\mbox{}\hfill0&\mbox{}\hfill1&\mbox{}\hfill2&\mbox{}\hfill3&\mbox{}\hfill4&\mbox{}\hfill5&\mbox{}\hfill6&\mbox{}\hfill7&\mbox{}\hfill8&\mbox{}\hfill9&\mbox{}\hfill10&\mbox{}\hfill11&\mbox{}\hfill12\\\hline
 0~~&\mbox{}\hfill  1 \\
 1~~&\mbox{}\hfill  1 &\mbox{}\hfill  0 \\
 2~~&\mbox{}\hfill  1 &\mbox{}\hfill  1 &\mbox{}\hfill  0 \\
 3~~&\mbox{}\hfill  1 &\mbox{}\hfill  2 &\mbox{}\hfill  1 &\mbox{}\hfill  0 \\
 4~~&\mbox{}\hfill  1 &\mbox{}\hfill  3 &\mbox{}\hfill  3 &\mbox{}\hfill  1 &\mbox{}\hfill  1 \\
 5~~&\mbox{}\hfill  1 &\mbox{}\hfill  4 &\mbox{}\hfill  6 &\mbox{}\hfill  4 &\mbox{}\hfill  2 &\mbox{}\hfill  0 \\
 6~~&\mbox{}\hfill  1 &\mbox{}\hfill  5 &\mbox{}\hfill 10 &\mbox{}\hfill 10 &\mbox{}\hfill  6 &\mbox{}\hfill  2 &\mbox{}\hfill  0 \\
 7~~&\mbox{}\hfill  1 &\mbox{}\hfill  6 &\mbox{}\hfill 15 &\mbox{}\hfill 20 &\mbox{}\hfill 16 &\mbox{}\hfill  8 &\mbox{}\hfill  2 &\mbox{}\hfill  0 \\
 8~~&\mbox{}\hfill  1 &\mbox{}\hfill  7 &\mbox{}\hfill 21 &\mbox{}\hfill 35 &\mbox{}\hfill 36 &\mbox{}\hfill 24 &\mbox{}\hfill 10 &\mbox{}\hfill  2 &\mbox{}\hfill  1 \\
 9~~&\mbox{}\hfill  1 &\mbox{}\hfill  8 &\mbox{}\hfill 28 &\mbox{}\hfill 56 &\mbox{}\hfill 71 &\mbox{}\hfill 60 &\mbox{}\hfill 34 &\mbox{}\hfill 12 &\mbox{}\hfill  3 &\mbox{}\hfill  0 \\
10~~&\mbox{}\hfill  1 &\mbox{}\hfill  9 &\mbox{}\hfill 36 &\mbox{}\hfill 84 &\mbox{}\hfill127 &\mbox{}\hfill131 &\mbox{}\hfill94 &\mbox{}\hfill 46 &\mbox{}\hfill  15 &\mbox{}\hfill  3 &\mbox{}\hfill  0 \\
11~~&\mbox{}\hfill  1 &\mbox{}\hfill  10 &\mbox{}\hfill 45 &\mbox{}\hfill120 &\mbox{}\hfill211 &\mbox{}\hfill258 &\mbox{}\hfill225 &\mbox{}\hfill 140 &\mbox{}\hfill 61 &\mbox{}\hfill  18 &\mbox{}\hfill  3 &\mbox{}\hfill  0 \\
12~~&\mbox{}\hfill  1 &\mbox{}\hfill  11 &\mbox{}\hfill 55 &\mbox{}\hfill165 &\mbox{}\hfill331 &\mbox{}\hfill469 &\mbox{}\hfill483 &\mbox{}\hfill365 &\mbox{}\hfill 201 &\mbox{}\hfill 79 &\mbox{}\hfill  21 &\mbox{}\hfill  3 &\mbox{}\hfill  1 \\
\end{tabular}
\caption{A Pascal-like triangle (\seqnum{A349839}) with entries
  $\protect\binom{n}{k}_4$.}
\label{f:m=4}
\end{figure}

In the following theorem we link this family of triangles to Riordan
arrays. To do so we need the result that if 
\begin{equation}\label{e:Aseq} 
(p,q)_{n,k}=A_0(p,q)_{n-1,k-1}+A_1(p,q)_{n-1,k}+\cdots+A_j(p,q)_{n-1,k-1+j}+\cdots,
\end{equation} 
where the so-called $A$-sequence $\{A_j\}_{j\ge0}$ are the coefficients of the
generating function $A(x)$, then $q(x)=xA(q(x))$ \cite{Rog78,Spr94}.

\begin{thm}\label{T:R3A}
The triangle whose entries are $\tbinom{n}{k}_m$ as defined above is
the row-reversed $(1/(1-x^m),x/(1-x))$ Riordan array.
\end{thm}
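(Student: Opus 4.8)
The plan is to identify the row-reversed Riordan array's entries explicitly and then verify that they satisfy the defining conditions of $\tbinom{n}{k}_m$: the two boundary conditions and Pascal's recurrence in the interior. Since the triangle $\tbinom{n}{k}_m$ is uniquely determined by those conditions, matching them suffices. So first I would write down $R_{n,k} := (1/(1-x^m),x/(1-x))_{n,n-k} = [x^n]\,\frac{1}{1-x^m}\left(\frac{x}{1-x}\right)^{n-k}$ and aim to show $R_{n,k}=\tbinom{n}{k}_m$ for all $0\le k\le n$.

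**Next** I would dispatch the boundary cases. For the left side $k=0$: $R_{n,0}=[x^n]\frac{1}{1-x^m}\left(\frac{x}{1-x}\right)^n$; since $\left(\frac{x}{1-x}\right)^n = x^n(1+x+x^2+\cdots)$ contributes only $1$ to $[x^n]$ after multiplying by $\frac{1}{1-x^m}=1+x^m+\cdots$, we get $R_{n,0}=1$, matching $\tbinom{n}{0}_m=1$. For the main diagonal $k=n$: $R_{n,n}=[x^n]\frac{1}{1-x^m}\cdot 1 = [x^n](1+x^m+x^{2m}+\cdots)=\delta_{n\bmod m,0}$, matching $\tbinom{n}{n}_m$. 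These are the easy parts.

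**The main step** is the interior recurrence for $0<k<n$, i.e.\ showing $R_{n,k}=R_{n-1,k}+R_{n-1,k-1}$. I expect to use the $A$-sequence machinery quoted in \eqref{e:Aseq}: the array $(1/(1-x^m),x/(1-x))$ has $q(x)=x/(1-x)$, and from $q(x)=xA(q(x))$ one finds $A(x)=1+x$, so the Riordan array itself obeys $(p,q)_{n,k}=(p,q)_{n-1,k-1}+(p,q)_{n-1,k}$. Translating this to the row-reversed indices: with $R_{n,k}=(p,q)_{n,n-k}$, the column index $n-k$ under the original recurrence at row $n$ relates $(p,q)_{n,n-k}$ to $(p,q)_{n-1,n-k-1}+(p,q)_{n-1,n-k}$; rewriting $n-k-1=(n-1)-k$ and $n-k=(n-1)-(k-1)$ gives exactly $R_{n,k}=R_{n-1,k}+R_{n-1,k-1}$, \emph{provided} the original recurrence is valid at $(n,n-k)$, which holds when $0<n-k<n$, i.e.\ $0<k<n$ — precisely the interior range we need. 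The subtlety to watch is that reversing rows of different lengths can break a recurrence near the diagonal if the stencil reaches outside the triangle; here the two-term $A$-sequence $\{1,1\}$ reaches only to $(p,q)_{n-1,n-k}=R_{n-1,k-1}$, which for $k\le n$ stays within row $n-1$ (index $\le n-1$), so no overflow occurs.

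**The hard part** is likely to be handling this diagonal boundary carefully and confirming $A(x)=1+x$ cleanly — one must check that the row-reversal does not mix in the $p$-sequence contribution and that the recurrence genuinely fails to hold (as it must, since $R_{n,n}$ is not $R_{n-1,n}+R_{n-1,n-1}$) only exactly at $k=n$, consistent with the stated definition. Once the three cases (left side, diagonal, interior) are verified against the defining properties, uniqueness of the triangle satisfying \eqref{e:PR} with the prescribed sides closes the argument. An alternative, if one prefers to avoid the $A$-sequence route, is a direct generating-function manipulation: show $R_{n,k}-R_{n-1,k}-R_{n-1,k-1}=0$ by writing each as a coefficient extraction and combining $\frac{1}{1-x^m}\left[\left(\frac{x}{1-x}\right)^{n-k}\right]$ terms after shifting $n\mapsto n-1$; the identity $\left(\frac{x}{1-x}\right)^{j} = x\left(\frac{x}{1-x}\right)^{j-1} + x\left(\frac{x}{1-x}\right)^{j}$ (equivalently $\frac{x}{1-x}\cdot\text{stuff}$) is the engine, but the $A$-sequence phrasing is cleaner and is exactly what the lead-in to the theorem set up.
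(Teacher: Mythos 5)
Your proposal is correct, and it runs on the same engine as the paper's proof --- the $A$-sequence relation $q(x)=xA(q(x))$ from \eqref{e:Aseq} with $A(x)=1+x$, together with the observation that reversing the rows converts Pascal's recurrence \eqref{e:PR} into the two-term $A$-sequence recurrence $(p,q)_{n,k}=(p,q)_{n-1,k-1}+(p,q)_{n-1,k}$ --- but you run it in the opposite logical direction. The paper starts from the triangle $\tbinom{n}{k}_m$, reads off $p=1/(1-x^m)$ from the $0$th column of the row-reversed matrix, translates \eqref{e:PR} into the $A$-sequence recurrence, and concludes $q=x/(1-x)$ by solving $q=x(1+q)$; implicitly it leans on Rogers' characterization (an array with an $A$-sequence and nonzero $(0,0)$ entry \emph{is} a Riordan array) to guarantee existence. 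You instead start from the known Riordan array $(1/(1-x^m),x/(1-x))$, solve $q=xA(q)$ for $A(x)=1+x$ to obtain its recurrence, translate back through the row reversal, check the two boundary columns by direct coefficient extraction, and close with the uniqueness of the triangle determined by \eqref{e:PR} and its sides. Your version is somewhat longer but logically more self-contained: it does not need the existence half of the $A$-sequence characterization, only the (easier) fact that a given Riordan array satisfies the recurrence coming from its $A$-sequence, and it makes explicit the boundary verifications (including that the recurrence translation stays inside row $n-1$) that the paper leaves tacit. One cosmetic slip: $\left(\frac{x}{1-x}\right)^n$ is $x^n(1-x)^{-n}$, not $x^n(1+x+x^2+\cdots)$, though your conclusion $R_{n,0}=1$ is unaffected since only the constant term of $(1-x)^{-n}/(1-x^m)$ matters.
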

\begin{proof}
Row-reversing the elements of the triangle gives a lower triangular
matrix whose 0th column is the repetition of 1 followed by $m-1$
zeros, i.e., the coefficients of the series expansion of
$1/(1-x^m)$. Hence if the triangle is a row-reversed $(p,q)$ Riordan
array then $p=1/(1-x^m)$. Rewriting the recursion relation
\eqref{e:PR} in terms of the elements $(p,q)_{n,k}$ of the
row-reversed triangle and then replacing $n-k$ by $k$ gives
\[
(p,q)_{n,k}=(p,q)_{n-1,k-1}+(p,q)_{n-1,k}.
\]
Hence $A(x)=1+x$ and so $q=xA(q)=x(1+q)$ from which $q=x/(1-x)$. 
\end{proof}

A number of properties of the triangles can then be obtained easily.
The first of these follows immediately from the definition of a Riordan array.
\begin{cor}
\begin{equation}\label{e:tri1entry} 
\binom{n}{k}_m=[x^n]\frac1{1-x^m}\left(\frac{x}{1-x}\right)^{n-k}.
\end{equation} 
\end{cor}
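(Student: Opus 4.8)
The plan is simply to unwind the definitions, which is precisely what ``follows immediately from the definition of a Riordan array'' means here. By Theorem~\ref{T:R3A} the $m$-th triangle, with entries $\binom{n}{k}_m$, is the row-reversed $(1/(1-x^m),x/(1-x))$ Riordan array. The introduction defines the $(n,k)$-th element of the row-reversed $(p,q)$ Riordan array to be $(p(x),q(x))_{n,n-k}$, and the $(n,k)$-th entry of a $(p,q)$ Riordan array to be $[x^n]p(x)(q(x))^k$. Composing these two facts with $p(x)=1/(1-x^m)$ and $q(x)=x/(1-x)$ gives $\binom{n}{k}_m=(p,q)_{n,n-k}=[x^n]\tfrac{1}{1-x^m}\bigl(\tfrac{x}{1-x}\bigr)^{n-k}$, which is \eqref{e:tri1entry}.

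There is essentially no obstacle; the one thing worth double-checking is that the exponent is $n-k$ and not $k$, i.e.\ that the index reversal $k\mapsto n-k$ built into the notion of a row-reversed Riordan array has been tracked correctly. A quick consistency check against the defining data confirms this: at $k=0$ the right-hand side is $[x^n]\tfrac{1}{1-x^m}\bigl(\tfrac{x}{1-x}\bigr)^n=[x^0]\tfrac{1}{(1-x)^n(1-x^m)}=1=\binom{n}{0}_m$, and at $k=n$ it is $[x^n]\tfrac{1}{1-x^m}=\delta_{n\bmod m,0}=\binom{n}{n}_m$.

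For a reader who prefers not to invoke Theorem~\ref{T:R3A}, one can instead verify \eqref{e:tri1entry} directly. Write $F(n,k)$ for its right-hand side, check the two boundary values exactly as above, and note that for $0<k<n$ one has $F(n-1,k)+F(n-1,k-1)=[x^{n-1}]\tfrac{1}{1-x^m}\bigl(\tfrac{x}{1-x}\bigr)^{n-1-k}\bigl(1+\tfrac{x}{1-x}\bigr)$; pulling one factor of $x/(1-x)$ out of this shows it equals $[x^n]\tfrac{1}{1-x^m}\bigl(\tfrac{x}{1-x}\bigr)^{n-k}=F(n,k)$. Hence $F$ obeys the same recurrence and boundary conditions that define $\binom{n}{k}_m$, so the two coincide.
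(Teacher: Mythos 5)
Your proof is correct and follows the paper's route exactly: the corollary is just Theorem~\ref{T:R3A} combined with the definitions of a Riordan array and its row-reversal, and you track the index reversal $k\mapsto n-k$ correctly. The boundary checks and the alternative direct verification via Pascal's recurrence are sound extras but not needed.
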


The next result is 
a more explicit expression for the general term in the triangles.
\begin{cor}
$\tbinom{n}{n}_m=\delta_{n \bmod m,0}$ and, for $0\le k<n$, 
\begin{equation}\label{e:genterm}  
\binom{n}{k}_m=
\sum_{j=0}^{\floor{k/m}}\binom{n-mj-1}{n-k-1}.
\end{equation} 
\end{cor}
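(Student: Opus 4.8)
The plan is to derive \eqref{e:genterm} directly from the closed form \eqref{e:tri1entry} established in the previous corollary, treating the main-diagonal entry separately. For the diagonal one can either invoke the defining condition $\tbinom{n}{n}_m=\delta_{n\bmod m,0}$, or set $k=n$ in \eqref{e:tri1entry} to get $\tbinom{n}{n}_m=[x^n]\,1/(1-x^m)=[x^n]\sum_{j\ge0}x^{mj}$, which is $1$ when $m\mid n$ and $0$ otherwise.

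For $0\le k<n$, put $\ell=n-k\ge1$ and expand the two factors of \eqref{e:tri1entry} as formal power series, namely $1/(1-x^m)=\sum_{j\ge0}x^{mj}$ and
\[
\left(\frac{x}{1-x}\right)^{\ell}=x^{\ell}\sum_{i\ge0}\binom{\ell-1+i}{i}x^{i}.
\]
Forming the product and extracting the coefficient of $x^n$ forces $mj+\ell+i=n$, i.e.\ $i=k-mj$, which is a nonnegative integer precisely when $0\le j\le\floor{k/m}$. This yields
\[
\binom{n}{k}_m=\sum_{j=0}^{\floor{k/m}}\binom{\ell-1+(k-mj)}{k-mj}=\sum_{j=0}^{\floor{k/m}}\binom{n-mj-1}{k-mj}.
\]
Since $mj\le k\le n-1$, each term has $n-mj-1\ge n-k-1\ge0$ and $0\le k-mj\le n-mj-1$, so the symmetry $\binom{a}{b}=\binom{a}{a-b}$ turns $\binom{n-mj-1}{k-mj}$ into $\binom{n-mj-1}{n-k-1}$, which is \eqref{e:genterm}.

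This is a coefficient extraction from a product of two elementary series, so it is essentially bookkeeping; the only steps that need a moment's care are pinning down the range of $j$ (which comes from the constraint $i\ge0$) and the concluding use of binomial symmetry, for which one must check that the indices are genuine nonnegative integers in the permitted range --- both verified above. Alternatively, \eqref{e:genterm} can be proved by induction on $n$ from Pascal's recurrence \eqref{e:PR}, using $\binom{n-mj-1}{n-k-1}=\binom{n-mj-2}{n-k-1}+\binom{n-mj-2}{n-k-2}$ and re-indexing the sum, but the generating-function argument above is the shortest.
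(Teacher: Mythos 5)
Your proof is correct and follows essentially the same route as the paper: extracting the coefficient of $x^n$ from the product of the series for $1/(1-x^m)$ and a power of $x/(1-x)$, pinning down the range of $j$ from the nonnegativity of the inner index, and finishing with binomial symmetry. The only cosmetic difference is that you work directly from the row-reversed formula \eqref{e:tri1entry}, whereas the paper computes the Riordan array entry $(1/(1-x^m),x/(1-x))_{n,k}$ first and applies the row reversal (Theorem~\ref{T:R3A}) at the end.
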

\begin{proof}
The expression for $\tbinom{n}{n}_m$ follows immediately from the
definition of the array. For the other values, 
we start with the definition of the $(n,k)$-th element of the Riordan
array which for $k>0$ gives
\[
\left(\frac1{1-x^m},\frac{x}{1-x}\right)_{n,k}=
[x^n]\frac1{1-x^m}\frac{x^k}{(1-x)^k}\\
=[x^{n-k}]\sum_{j=0}^\infty x^{mj}\sum_{r=0}^\infty\binom{k+r-1}{r}x^r.
\]
To obtain the coefficient of $x^{n-k}$ we only require the $r=n-k-mj$
term in the sum over $r$, and as $r$ cannot be negative, $j$ cannot
exceed $(n-k)/m$. This leaves
\[
\left(\frac1{1-x^m},\frac{x}{1-x}\right)_{n,k}=
\sum_{j=0}^{\floor{(n-k)/m}}\binom{n-mj-1}{n-k-mj}
=\sum_{j=0}^{\floor{(n-k)/m}}\binom{n-mj-1}{k-1}.
\] 
The result then follows from Theorem~\ref{T:R3A}.
\end{proof}

The $k$-th column of a $(p,q)$ Riordan array, whose generating function
is $pq^k$, becomes the $k$-th subdiagonal (counting the main
diagonal as the 0th subdiagonal) of the row-reversed $(p,q)$
Riordan array after removing the initial $k$ zeros. Hence the
corresponding generating function is $p(x)(q(x)/x)^k$ and we have the
following result.

\begin{cor}
The generating function for the $k$-th subdiagonal is $1/((1-x^m)(1-x)^k)$.
\end{cor}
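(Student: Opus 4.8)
The plan is to apply the column-to-subdiagonal dictionary stated in the paragraph immediately preceding the corollary, fed with the specific $p$ and $q$ supplied by Theorem~\ref{T:R3A}. Recall that the $(n,k)$-th entry of the $(p,q)$ Riordan array is $[x^n]p(x)(q(x))^k$ and that this vanishes for $n<k$ because $q(x)=q_1x+q_2x^2+\cdots$ has zero constant term. Under row-reversal the $k$-th column, regarded as a sequence indexed by $n\ge k$, is relocated, after deleting its leading block of $k$ zeros, to the $k$-th subdiagonal of the row-reversed array (the entry in position $(n,n-k)$ of the row-reversed array being $(p,q)_{n,k}$). First I would make the resulting index shift explicit:
\[
\sum_{n\ge k}\bigl([x^n]p(x)(q(x))^k\bigr)x^{n-k}=x^{-k}p(x)(q(x))^k=p(x)\left(\frac{q(x)}{x}\right)^k,
\]
which is a genuine formal power series since $q(x)/x=q_1+q_2x+\cdots$ has no negative powers of $x$.

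The second step is substitution. By Theorem~\ref{T:R3A} the triangle with entries $\tbinom{n}{k}_m$ is the row-reversed $(1/(1-x^m),x/(1-x))$ Riordan array, so $p(x)=1/(1-x^m)$ and $q(x)=x/(1-x)$; hence $q(x)/x=1/(1-x)$ and
\[
p(x)\left(\frac{q(x)}{x}\right)^k=\frac1{1-x^m}\cdot\frac1{(1-x)^k}=\frac1{(1-x^m)(1-x)^k},
\]
as claimed. As consistency checks, the $k=0$ case recovers $1/(1-x^m)$, the generating function of the main diagonal (equivalently of the $0$th column of the underlying array, i.e., of $p(x)$), and the $m=1$ case gives $1/(1-x)^{k+1}$, the generating function of $\binom{n+k}{k}$, matching the $k$-th subdiagonal of Pascal's triangle.

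There is no genuinely hard step: the substance lies entirely in the identification of $p$ and $q$ already carried out in Theorem~\ref{T:R3A}, combined with the column-to-subdiagonal transfer rule established just above the statement. The only points needing a little care are the off-by-$k$ index shift and the observation that dividing $q(x)$ by $x$ stays within the ring of formal power series (which holds because $q(x)$ has vanishing constant term); both are routine.
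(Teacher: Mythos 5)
Your proposal is correct and follows essentially the same route as the paper: the paper likewise derives the subdiagonal generating function $p(x)(q(x)/x)^k$ from the column generating function $p q^k$ by stripping the leading $k$ zeros, and then substitutes $p=1/(1-x^m)$, $q=x/(1-x)$ from Theorem~\ref{T:R3A}. Your explicit index-shift computation and consistency checks only add detail to the argument the paper leaves implicit.
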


The next result follows from the fact that
the generating function for the sums of the rows of a $(p,q)$
Riordan array (and therefore also the corresponding row-reversed
array) is $p/(1-q)$ \cite{Spr94}. 
\begin{cor}
The generating function $g\rb{r}(x)$ for the row sums of the $m$-th triangle
is given by
\begin{equation}\label{e:rowsumgf} 
g\rb{r}(x)=\frac{1-x}{(1-x^m)(1-2x)}=\frac1{(1+x+\cdots+x^{m-1})(1-2x)}.
\end{equation}
\end{cor}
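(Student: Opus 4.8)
The plan is to deduce this directly from Theorem~\ref{T:R3A} and the cited formula for the row sums of a Riordan array. First I would note that reversing the order of the entries of a finite row leaves its sum unchanged, so the sequence of row sums of the $m$-th triangle coincides with the sequence of row sums of the underlying $(1/(1-x^m),x/(1-x))$ Riordan array. I would then apply the known result that the generating function for the row sums of a $(p,q)$ Riordan array is $p(x)/(1-q(x))$, and specialise to $p(x)=1/(1-x^m)$ and $q(x)=x/(1-x)$.

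What remains is a short simplification. Substituting these into $p/(1-q)$ gives
\[
g\rb{r}(x)=\frac{p(x)}{1-q(x)}=\frac{1/(1-x^m)}{1-x/(1-x)}=\frac{1/(1-x^m)}{(1-2x)/(1-x)}=\frac{1-x}{(1-x^m)(1-2x)},
\]
which is the first claimed form. Factoring $1-x^m=(1-x)(1+x+\cdots+x^{m-1})$ then cancels the factor $1-x$ in the numerator and produces the equivalent expression $1/((1+x+\cdots+x^{m-1})(1-2x))$.

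I do not expect any real obstacle here; the only point worth a remark is that $1/(1-q(x))=\sum_{k\ge0}q(x)^k$ is a well-defined element of $\mathbb{Z}[[x]]$ because $q(x)=x/(1-x)$ has zero constant term. In fact one can sidestep the row-sum formula entirely and argue from scratch: $\sum_{k\ge0}(p,q)_{n,k}=[x^n]\sum_{k\ge0}p(x)q(x)^k=[x^n]\,p(x)/(1-q(x))$, the interchange of sums being legitimate precisely because of this vanishing constant term, so that each coefficient receives only finitely many contributions. Either way the computation is immediate once Theorem~\ref{T:R3A} is in hand.
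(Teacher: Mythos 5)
Your proposal is correct and follows the same route as the paper: identify the triangle as the row-reversed $(1/(1-x^m),x/(1-x))$ Riordan array via Theorem~\ref{T:R3A}, observe that row reversal preserves row sums, and apply the standard $p/(1-q)$ row-sum formula before simplifying. The extra remark on why $1/(1-q(x))$ is a well-defined formal power series is a harmless (and correct) addition that the paper leaves implicit.
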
 
From \eqref{e:rowsumgf}, the recursion relation giving $r_n$, the sum
of the $n$-th row, can be expressed as
$r_n=2r_{n-1}+r_{n-m}-2r_{n-m-1}+\delta_{n,0}-\delta_{n,1}$ or
$r_n=r_{n-1}+\cdots+r_{n-m+1}+2r_{n-m}+\delta_{n,0}$. For
$m=1,\dots,5$ these correspond to the sequences \seqnum{A000079},
\seqnum{A001045}$(n+1)$, \seqnum{A077947}, \seqnum{A115451}, and
\seqnum{A349842}, respectively.

The sum of the $n$-th antidiagonal of a row-reversed $(p,q)$ Riordan
array (counting the $(0,0)$-th element as the 0th antidiagonal)
can be obtained from the generating function
$p(x^2)/(1-q(x^2)/x)$. This leads to the following result for our
triangles.
\begin{cor}
For the $m$-th triangle, the generating function $g\rb{a}$ for the
antidiagonal sums is given by
\begin{equation}\label{e:diagsumgf} 
g\rb{a}(x)=\frac{1-x^2}{(1-x^{2m})(1-x-x^2)}
=\frac1{(1+x^2+\cdots+x^{2(m-1)})(1-x-x^2)}.
\end{equation}
\end{cor}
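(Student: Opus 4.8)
The plan is to apply, to the array identified in Theorem~\ref{T:R3A}, the generating-function formula for antidiagonal sums of a row-reversed $(p,q)$ Riordan array that was just stated, namely $p(x^2)/(1-q(x^2)/x)$.

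First I would recall from Theorem~\ref{T:R3A} that the $m$-th triangle is the row-reversed $(1/(1-x^m),x/(1-x))$ Riordan array, so that $p(x)=1/(1-x^m)$ and $q(x)=x/(1-x)$. Replacing $x$ by $x^2$ gives $p(x^2)=1/(1-x^{2m})$ and $q(x^2)=x^2/(1-x^2)$, hence $q(x^2)/x=x/(1-x^2)$. Since this is a power series with zero constant term, the geometric series $\sum_{k\ge0}(q(x^2)/x)^k=1/(1-q(x^2)/x)$ makes sense as a formal power series, so the quoted formula is legitimate here.

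Next I would simplify the denominator,
\[
1-\frac{q(x^2)}{x}=1-\frac{x}{1-x^2}=\frac{1-x-x^2}{1-x^2},
\]
and divide $p(x^2)$ by it to obtain
\[
g\rb{a}(x)=\frac{p(x^2)}{1-q(x^2)/x}=\frac{1-x^2}{(1-x^{2m})(1-x-x^2)},
\]
which is the first form in \eqref{e:diagsumgf}. The second form then follows from the factorization $1-x^{2m}=(1-x^2)(1+x^2+\cdots+x^{2(m-1)})$, giving $(1-x^2)/(1-x^{2m})=1/(1+x^2+\cdots+x^{2(m-1)})$.

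I do not expect any real obstacle: once the antidiagonal-sum formula is granted, the rest is routine algebra, and the $m=1$ check reduces to the familiar Fibonacci generating function $1/(1-x-x^2)$ for the antidiagonal sums of Pascal's triangle. Should one wish to make the argument self-contained, I would first derive the formula $p(x^2)/(1-q(x^2)/x)$ by writing the $n$-th antidiagonal sum of the row-reversed array as $\sum_{j\ge0}(p,q)_{n-j,n-2j}$, reindexing by the Riordan column $k=n-2j$ (so the row is $n-j=k+j$), extracting the coefficient $[t^{k+j}]p(t)q(t)^k$, and observing that performing the sum over $j$ first reassembles $x^{-k}p(x^2)q(x^2)^k$, after which summing the resulting geometric series over $k$ yields the claim; this intermediate identity is the only step that needs genuine verification, and it is not difficult.
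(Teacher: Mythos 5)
Your proposal is correct and follows exactly the paper's route: the corollary is obtained by substituting $p(x)=1/(1-x^m)$ and $q(x)=x/(1-x)$ into the antidiagonal-sum formula $p(x^2)/(1-q(x^2)/x)$ stated just before it, and your algebraic simplification and the factorization $1-x^{2m}=(1-x^2)(1+x^2+\cdots+x^{2(m-1)})$ are both right. Your closing sketch of how to derive the formula $p(x^2)/(1-q(x^2)/x)$ itself is a correct bonus that the paper omits (it states the formula without proof), but it does not change the fact that the argument is essentially the same.
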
 
Thus the recursion relation for $a_n$, the sum of the $n$th
antidiagonal, can be written as 
$a_n=a_{n-1}+a_{n-2}+a_{n-2m}-a_{n-2m-1}-a_{n-2m-2}+\delta_{n,0}-\delta_{n,2}$
or 
$a_n=a_{n-1}+a_{n-3}+\cdots+a_{n-2m+1}+a_{n-2m}+\delta_{n,0}$.
For
$m=1,\dots,5$ these correspond to the sequences \seqnum{A000045}$(n+1)$,
\seqnum{A006498}, \seqnum{A079962}, \seqnum{A349840}, and
\seqnum{A349843}, respectively. 

The bivariate generating function for the row-reversed $(p,q)$ Riordan
array is $p(xy)/(1-q(xy)/y)$. 
\begin{cor}
The bivariate generating function
for the $m$-th triangle is given by
\begin{equation}\label{e:bgf}
g_m(x,y)=\frac{1-xy}{(1-(xy)^m)(1-x-xy)}
=\frac1{(1+xy+\cdots+(xy)^{m-1})(1-x-xy)}.
\end{equation} 
\end{cor}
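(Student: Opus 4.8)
The plan is to read off \eqref{e:bgf} from Theorem~\ref{T:R3A} together with the formula $p(xy)/(1-q(xy)/y)$ for the bivariate generating function of a row-reversed Riordan array quoted just above the corollary. Since, as noted in \S\ref{s:fam1}, that formula does not seem to be recorded elsewhere, I would first give a short self-contained derivation of it, and then perform the (routine) substitution of the particular $p$ and $q$.

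For the general formula, let $B(x,y)=\sum_{n\ge0}\sum_{k=0}^{n}(p,q)_{n,n-k}\,x^{n}y^{k}$ be the bivariate generating function of the row-reversed $(p,q)$ Riordan array, whose $(n,k)$-th entry is $(p,q)_{n,n-k}$. Putting $j=n-k$ (so that $k$ running from $0$ to $n$ corresponds to $j$ running from $n$ down to $0$) gives
\[
B(x,y)=\sum_{n\ge0}\sum_{j=0}^{n}(p,q)_{n,j}\,(xy)^{n}y^{-j}
=\sum_{j\ge0}y^{-j}\sum_{n\ge0}[x^{n}]\!\bigl(p(x)q(x)^{j}\bigr)(xy)^{n}.
\]
The inner sum equals $p(xy)q(xy)^{j}$, so $B(x,y)=p(xy)\sum_{j\ge0}\bigl(q(xy)/y\bigr)^{j}=p(xy)/(1-q(xy)/y)$, every identity being understood in the ring of formal power series. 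The only point requiring a word of care is that $q(xy)/y$ has the form $x\cdot(\text{series in }xy)$, hence has zero constant term, so the geometric series is a legitimate formal power series and each coefficient of $x^{n}y^{k}$ in $B(x,y)$ is a finite sum.

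Next I would specialize. By Theorem~\ref{T:R3A} we may take $p(x)=1/(1-x^{m})$ and $q(x)=x/(1-x)$, so $p(xy)=1/(1-(xy)^{m})$ and $q(xy)/y=\bigl(xy/(1-xy)\bigr)/y=x/(1-xy)$. Hence $1-q(xy)/y=(1-xy-x)/(1-xy)$ and
\[
g_{m}(x,y)=\frac{p(xy)}{1-q(xy)/y}
=\frac{1-xy}{(1-(xy)^{m})(1-x-xy)},
\]
which is the first expression in \eqref{e:bgf}; the second follows from the factorization $1-t^{m}=(1-t)(1+t+\cdots+t^{m-1})$ with $t=xy$, which cancels the factor $1-xy$.

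There is no real obstacle here: the argument is a direct manipulation of formal power series, and the only slightly delicate step is the index substitution $j=n-k$ and the bookkeeping of the negative power $y^{-j}$, which is absorbed harmlessly into $(q(xy)/y)^{j}$. As a consistency check (not logically needed) one may note that setting $y=1$ recovers the row-sum generating function \eqref{e:rowsumgf} and setting $y=x$ recovers the antidiagonal-sum generating function \eqref{e:diagsumgf}.
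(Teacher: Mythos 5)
Your proof is correct and follows essentially the same route as the paper, which simply substitutes $p(x)=1/(1-x^m)$ and $q(x)=x/(1-x)$ into the formula $p(xy)/(1-q(xy)/y)$ stated just before the corollary. The only difference is that you also supply a derivation of that general formula for the bivariate generating function of a row-reversed Riordan array (which the paper asserts without proof), and your derivation and the final substitution are both sound.
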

Note that it is a generating function in the sense that
$\tbinom{n}{k}_m=[x^ny^k]g_m(x,y)$.

\section{Triangles derived from tiling}\label{s:fam2}

For $m=1,2,\dots$, let $\tchb{n}{k}_m$ denote the number of
$n$-tile tilings that use $k$ $(1,m-1)$-fences (and $n-k$ squares).  The
$m=2,3,4$ cases are shown in Figs.~\ref{f:m=2}, \ref{f:m=3t}, and
\ref{f:m=4t}. The cases $m=1,\dots,5$ are sequences \seqnum{A007318},
\seqnum{A059259}, \seqnum{A350110}, \seqnum{A350111}, and
\seqnum{A350112}, respectively.

\begin{figure}
\begin{tabular}{c|*{14}{p{1.6em}}}
$n$ $\backslash$ $k$&\mbox{}\hfill0&\mbox{}\hfill1&\mbox{}\hfill2&\mbox{}\hfill3&\mbox{}\hfill4&\mbox{}\hfill5&\mbox{}\hfill6&\mbox{}\hfill7&\mbox{}\hfill8&\mbox{}\hfill9&\mbox{}\hfill10&\mbox{}\hfill11&\mbox{}\hfill12&\mbox{}\hfill13\\\hline
 0~~&\mbox{}\hspace*{\fill}\textbf{  1}\\
 1~~&\mbox{}\hspace*{\fill}\textbf{  1}&\mbox{}\hspace*{\fill}\textbf{  0}\\
 2~~&\mbox{}\hspace*{\fill}\textbf{  1}&\mbox{}\hspace*{\fill}\textbf{  0}&\mbox{}\hspace*{\fill}\textbf{  0}\\
 3~~&\mbox{}\hspace*{\fill}\textbf{  1}&\mbox{}\hspace*{\fill}\textbf{  1}&\mbox{}\hspace*{\fill}\textbf{  1}&\mbox{}\hspace*{\fill}\textbf{  1}\\
 4~~&\mbox{}\hspace*{\fill}\textbf{  1}&\mbox{}\hspace*{\fill}\textbf{  2}&\mbox{}\hspace*{\fill}\textbf{  3}&\mbox{}\hspace*{\fill}\textbf{  2}&\mbox{}\hspace*{\fill}\textbf{  0}\\
 5~~&\mbox{}\hspace*{\fill}\textbf{  1}&\mbox{}\hspace*{\fill}\textbf{  3}&\mbox{}\hfill  5 &\mbox{}\hspace*{\fill}\textbf{  4}&\mbox{}\hspace*{\fill}\textbf{  0}&\mbox{}\hspace*{\fill}\textbf{  0}\\
 6~~&\mbox{}\hspace*{\fill}\textbf{  1}&\mbox{}\hspace*{\fill}\textbf{  4}&\mbox{}\hfill  8 &\mbox{}\hspace*{\fill}\textbf{  8}&\mbox{}\hspace*{\fill}\textbf{  4}&\mbox{}\hspace*{\fill}\textbf{  2}&\mbox{}\hspace*{\fill}\textbf{  1}\\
 7~~&\mbox{}\hspace*{\fill}\textbf{  1}&\mbox{}\hspace*{\fill}\textbf{  5}&\mbox{}\hfill 12 &\mbox{}\hfill 16 &\mbox{}\hfill 13 &\mbox{}\hspace*{\fill}\textbf{  9}&\mbox{}\hspace*{\fill}\textbf{  3}&\mbox{}\hspace*{\fill}\textbf{  0}\\
 8~~&\mbox{}\hspace*{\fill}\textbf{  1}&\mbox{}\hspace*{\fill}\textbf{  6}&\mbox{}\hfill 17 &\mbox{}\hfill 28 &\mbox{}\hspace*{\fill}\textbf{ 30}&\mbox{}\hfill 22 &\mbox{}\hspace*{\fill}\textbf{  9}&\mbox{}\hspace*{\fill}\textbf{  0}&\mbox{}\hspace*{\fill}\textbf{  0}\\
 9~~&\mbox{}\hspace*{\fill}\textbf{  1}&\mbox{}\hspace*{\fill}\textbf{  7}&\mbox{}\hfill 23 &\mbox{}\hfill 45 &\mbox{}\hfill 58 &\mbox{}\hfill 51 &\mbox{}\hspace*{\fill}\textbf{ 27}&\mbox{}\hspace*{\fill}\textbf{  9}&\mbox{}\hspace*{\fill}\textbf{  3}&\mbox{}\hspace*{\fill}\textbf{  1}\\
10~~&\mbox{}\hspace*{\fill}\textbf{  1}&\mbox{}\hspace*{\fill}\textbf{  8}&\mbox{}\hfill 30 &\mbox{}\hfill 68 &\mbox{}\hfill103 &\mbox{}\hfill108 &\mbox{}\hfill 78 &\mbox{}\hfill 40 &\mbox{}\hspace*{\fill}\textbf{ 18}&\mbox{}\hspace*{\fill}\textbf{  4}&\mbox{}\hspace*{\fill}\textbf{  0}\\
11~~&\mbox{}\hspace*{\fill}\textbf{  1}&\mbox{}\hspace*{\fill}\textbf{  9}&\mbox{}\hfill 38 &\mbox{}\hfill 98 &\mbox{}\hfill171 &\mbox{}\hfill211 &\mbox{}\hfill187 &\mbox{}\hspace*{\fill}\textbf{123}&\mbox{}\hfill 58 &\mbox{}\hspace*{\fill}\textbf{ 16}&\mbox{}\hspace*{\fill}\textbf{  0}&\mbox{}\hspace*{\fill}\textbf{  0}\\
12~~&\mbox{}\hspace*{\fill}\textbf{  1}&\mbox{}\hspace*{\fill}\textbf{ 10}&\mbox{}\hfill 47 &\mbox{}\hfill136 &\mbox{}\hfill269 &\mbox{}\hfill382 &\mbox{}\hfill399 &\mbox{}\hfill310 &\mbox{}\hfill176 &\mbox{}\hspace*{\fill}\textbf{ 64}&\mbox{}\hspace*{\fill}\textbf{ 16}&\mbox{}\hspace*{\fill}\textbf{  4}&\mbox{}\hspace*{\fill}\textbf{  1}\\
13~~&\mbox{}\hspace*{\fill}\textbf{  1}&\mbox{}\hspace*{\fill}\textbf{ 11}&\mbox{}\hfill 57 &\mbox{}\hfill183 &\mbox{}\hfill405 &\mbox{}\hfill651 &\mbox{}\hfill781 &\mbox{}\hfill708 &\mbox{}\hfill480 &\mbox{}\hfill240 &\mbox{}\hfill 90 &\mbox{}\hspace*{\fill}\textbf{ 30}&\mbox{}\hspace*{\fill}\textbf{  5}&\mbox{}\hspace*{\fill}\textbf{  0}\\
\end{tabular}
\caption{A Pascal-like triangle (\seqnum{A350110}) with entries
  $\protect\tchb{n}{k}_3$.}
\label{f:m=3t}
\end{figure}

\begin{figure}
\begin{tabular}{c|*{14}{p{1.6em}}}
$n$ $\backslash$ $k$&\mbox{}\hfill0&\mbox{}\hfill1&\mbox{}\hfill2&\mbox{}\hfill3&\mbox{}\hfill4&\mbox{}\hfill5&\mbox{}\hfill6&\mbox{}\hfill7&\mbox{}\hfill8&\mbox{}\hfill9&\mbox{}\hfill10&\mbox{}\hfill11&\mbox{}\hfill12&\mbox{}\hfill13\\\hline
 0~~&\mbox{}\hspace*{\fill}\textbf{  1}\\
 1~~&\mbox{}\hspace*{\fill}\textbf{  1}&\mbox{}\hspace*{\fill}\textbf{  0}\\
 2~~&\mbox{}\hspace*{\fill}\textbf{  1}&\mbox{}\hspace*{\fill}\textbf{  0}&\mbox{}\hspace*{\fill}\textbf{  0}\\
 3~~&\mbox{}\hspace*{\fill}\textbf{  1}&\mbox{}\hspace*{\fill}\textbf{  0}&\mbox{}\hspace*{\fill}\textbf{  0}&\mbox{}\hspace*{\fill}\textbf{  0}\\
 4~~&\mbox{}\hspace*{\fill}\textbf{  1}&\mbox{}\hspace*{\fill}\textbf{  1}&\mbox{}\hspace*{\fill}\textbf{  1}&\mbox{}\hspace*{\fill}\textbf{  1}&\mbox{}\hspace*{\fill}\textbf{  1}\\
 5~~&\mbox{}\hspace*{\fill}\textbf{  1}&\mbox{}\hspace*{\fill}\textbf{  2}&\mbox{}\hfill  3 &\mbox{}\hspace*{\fill}\textbf{  4}&\mbox{}\hspace*{\fill}\textbf{  2}&\mbox{}\hspace*{\fill}\textbf{  0}\\
 6~~&\mbox{}\hspace*{\fill}\textbf{  1}&\mbox{}\hspace*{\fill}\textbf{  3}&\mbox{}\hspace*{\fill}\textbf{  6}&\mbox{}\hfill  7 &\mbox{}\hspace*{\fill}\textbf{  4}&\mbox{}\hspace*{\fill}\textbf{  0}&\mbox{}\hspace*{\fill}\textbf{  0}\\
 7~~&\mbox{}\hspace*{\fill}\textbf{  1}&\mbox{}\hspace*{\fill}\textbf{  4}&\mbox{}\hfill  9 &\mbox{}\hfill 12 &\mbox{}\hspace*{\fill}\textbf{  8}&\mbox{}\hspace*{\fill}\textbf{  0}&\mbox{}\hspace*{\fill}\textbf{  0}&\mbox{}\hspace*{\fill}\textbf{  0}\\
 8~~&\mbox{}\hspace*{\fill}\textbf{  1}&\mbox{}\hspace*{\fill}\textbf{  5}&\mbox{}\hfill 13 &\mbox{}\hfill 20 &\mbox{}\hspace*{\fill}\textbf{ 16}&\mbox{}\hspace*{\fill}\textbf{  8}&\mbox{}\hspace*{\fill}\textbf{  4}&\mbox{}\hspace*{\fill}\textbf{  2}&\mbox{}\hspace*{\fill}\textbf{  1}\\
 9~~&\mbox{}\hspace*{\fill}\textbf{  1}&\mbox{}\hspace*{\fill}\textbf{  6}&\mbox{}\hfill 18 &\mbox{}\hfill 32 &\mbox{}\hfill 36 &\mbox{}\hfill 28 &\mbox{}\hfill 19 &\mbox{}\hspace*{\fill}\textbf{ 12}&\mbox{}\hspace*{\fill}\textbf{  3}&\mbox{}\hspace*{\fill}\textbf{  0}\\
10~~&\mbox{}\hspace*{\fill}\textbf{  1}&\mbox{}\hspace*{\fill}\textbf{  7}&\mbox{}\hfill 24 &\mbox{}\hfill 50 &\mbox{}\hfill 69 &\mbox{}\hfill 69 &\mbox{}\hspace*{\fill}\textbf{ 58}&\mbox{}\hfill 31 &\mbox{}\hspace*{\fill}\textbf{  9}&\mbox{}\hspace*{\fill}\textbf{  0}&\mbox{}\hspace*{\fill}\textbf{  0}\\
11~~&\mbox{}\hspace*{\fill}\textbf{  1}&\mbox{}\hspace*{\fill}\textbf{  8}&\mbox{}\hfill 31 &\mbox{}\hfill 74 &\mbox{}\hfill120 &\mbox{}\hfill144 &\mbox{}\hfill127 &\mbox{}\hfill 78 &\mbox{}\hspace*{\fill}\textbf{ 27}&\mbox{}\hspace*{\fill}\textbf{  0}&\mbox{}\hspace*{\fill}\textbf{  0}&\mbox{}\hspace*{\fill}\textbf{  0}\\
12~~&\mbox{}\hspace*{\fill}\textbf{  1}&\mbox{}\hspace*{\fill}\textbf{  9}&\mbox{}\hfill 39 &\mbox{}\hfill105 &\mbox{}\hfill195 &\mbox{}\hfill264 &\mbox{}\hfill265 &\mbox{}\hfill189 &\mbox{}\hspace*{\fill}\textbf{ 81}&\mbox{}\hspace*{\fill}\textbf{ 27}&\mbox{}\hspace*{\fill}\textbf{  9}&\mbox{}\hspace*{\fill}\textbf{  3}&\mbox{}\hspace*{\fill}\textbf{  1}\\
13~~&\mbox{}\hspace*{\fill}\textbf{  1}&\mbox{}\hspace*{\fill}\textbf{ 10}&\mbox{}\hfill 48 &\mbox{}\hfill144 &\mbox{}\hfill300 &\mbox{}\hfill458 &\mbox{}\hfill522 &\mbox{}\hfill432 &\mbox{}\hfill270 &\mbox{}\hfill132 &\mbox{}\hfill 58 &\mbox{}\hspace*{\fill}\textbf{ 24}&\mbox{}\hspace*{\fill}\textbf{  4}&\mbox{}\hspace*{\fill}\textbf{  0}\\
\end{tabular}
\caption{A Pascal-like triangle (\seqnum{A350111}) with entries
  $\protect\tchb{n}{k}_4$.}
\label{f:m=4t}
\end{figure}

We can also create a triangle of $\tch{n}{k}_m$ where
this denotes the number of tilings of an $n$-board that use $k$
$(1,m-1)$-fences (and $n-2k$ squares). The two triangles are related
via the following identity.

\begin{idn}\label{I:ch=chb}
For $n\ge k\ge0$, 
\[
\ch{n}{k}_m=\chb{n-k}{k}_m.
\]
\end{idn}
\begin{proof}
If a tiling contains $n-k$ tiles of which $k$ are fences, the total
length is $n-2k+2k=n$.
\end{proof}

As a consequence of Identity~\ref{I:ch=chb}, the antidiagonals of the
$\tchb{n}{k}_m$ triangle are the rows of the $\tch{n}{k}_m$ triangle.
In the rest of the paper we therefore only give identities for
one of the two families of tiling triangles and choose $\tchb{n}{k}_m$ as it
is more `compact' in the sense that its rows contain fewer trailing
zeros. Some of the identities, however, are more straightforward to
prove by considering the tiling of an $n$-board. The following
bijection (which is established in the proof of Theorem~5 in
\cite{EA21}) will be used in such proofs. Note that for convenience in
some of the proofs, we have extended the result to include $r=m$. This
is clearly valid as it is equivalent to changing $r$ to zero and
increasing $j$ by 1.

\begin{lemma}\label{L:bij}
For $j\ge0$ and $r=0,\ldots,m$, 
there is a bijection between the tilings of a
$(mj+r)$-board using $k$ $(1,m-1)$-fences and $mj+r-2k$ squares and
the tilings of an ordered $m$-tuple of $r$ $(j+1)$-boards
followed by $m-r$ $j$-boards using $k$ dominoes and $mj+r-2k$ squares.
\end{lemma}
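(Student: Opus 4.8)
The plan is to exhibit the bijection directly, as a ``de-interleaving'' of the long board into residue classes modulo $m$. Number the cells of the $(mj+r)$-board $1,2,\ldots,mj+r$, and for each cell $c$ write $c=s+(p-1)m$ with $s\in\{1,\ldots,m\}$ and $p\ge1$; I send this cell to cell $p$ of the $s$-th board in the target $m$-tuple. The first thing to check is that this is a bijection on cells with the correct bookkeeping: the $s$-th board receives exactly the cells $s,s+m,s+2m,\ldots$ that do not exceed $mj+r$, and a short count shows there are $j+1$ such cells when $s\le r$ and $j$ when $s>r$. Ordering the buckets by $s=1,\ldots,m$ therefore yields precisely $r$ boards of length $j+1$ followed by $m-r$ boards of length $j$.

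Next I would transport the tiles along this cell map. A square on cell $c$ is sent to a square on the corresponding cell of its bucket. The two posts of a $(1,m-1)$-fence occupy cells $c$ and $c+m$ of the long board; these lie in the same bucket $s$, and if $c$ is the $p$-th cell of that bucket then $c+m$ is the $(p+1)$-th, so the fence is sent to a domino on those two consecutive cells. Running the map backwards, a square pulls back to a square and a domino on cells $p,p+1$ of bucket $s$ pulls back to a $(1,m-1)$-fence on cells $s+(p-1)m$ and $s+pm$. Since the underlying cell map is a bijection, a tiling of the long board---no two tiles sharing a cell, every cell covered---is carried to tilings of the $m$ boards with the same two properties, and conversely; the two constructions are mutually inverse, the number of fences equals the number of dominoes $(=k)$, and in both descriptions the number of squares is the total length $mj+r$ minus twice that number, i.e.\ $mj+r-2k$.

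I do not expect a genuine obstacle: the construction is essentially forced once one notices that the gap width $m-1$ is exactly what makes the two posts of a fence, which are $m$ cells apart in the long board, land as \emph{consecutive} cells of a single bucket. The only items needing care are the arithmetic that the buckets have sizes $j+1$ (for $s\le r$) and $j$ (for $s>r$), and the boundary cases: when $j=0$ both sides collapse to the unique all-squares tiling (an $r$-board with $r\le m$ admits no fence, since a fence spans $m+1$ cells), and the case $r=m$ coincides with the instance obtained by replacing $(j,r)$ with $(j+1,0)$, exactly as remarked before the lemma.
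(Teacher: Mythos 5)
Your construction is correct and complete: the de-interleaving of cells by residue class modulo $m$ is exactly what makes the two posts of a $(1,m-1)$-fence (which sit $m$ cells apart) land on consecutive cells of one bucket, and your bookkeeping of the bucket sizes ($j+1$ for $s\le r$, $j$ for $s>r$) and of the $r=m$ boundary case is right. Note that the paper itself does not prove this lemma but defers to the proof of Theorem~5 of \cite{EA21}, where essentially this same residue-class bijection is established; so you have supplied, correctly, the argument the paper only cites.
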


Since a $(1,0)$-fence is just a domino,
$\tchb{n}{k}_1=\tbinom{n}{k}$ and so the corresponding triangle is
Pascal's triangle (\seqnum{A007318}), and $\tch{n}{k}_1$ are the
entries of the triangle \seqnum{A011973}.

\section{Relation of tiling triangles to polynomials}\label{s:poly}

Before examining the connection between the tiling triangles we
consider here and polynomials, we give a general result connecting
polynomials to triangles derived from arbitrary-length tilings when
the set of possible metatiles is finite. A \textit{metatile} is a
grouping of tiles that completely covers an integer number of cells
and cannot be split into smaller metatiles, and so any tiling of an
integer-length board can be expressed as a tiling using
metatiles \cite{Edw08}.  For example, when tiling with squares and
$(1,1)$-fences (the $m=2$ case), there are three types of metatile:
the square tile on its own (a \textit{free square}), a fence whose gap
is filled by a square (a \textit{filled fence}), and two interlocking
fences (a \textit{bifence}) \cite{EA21}.

We consider arbitrary-length tilings of boards that in general use at
least two types of tile, some but not all of which are regarded as
being special, and where the set of all possible metatiles is of finite
size $M$. We then define the following polynomial:
\begin{equation}\label{e:gpoly} 
p_n(x)=\delta_{n,0}+\sum_{i=1}^Mx^{s_i}p_{n-l_i}(x),\quad
p_{n<0}(x)=0,
\end{equation} 
where $s_i$ is the number of special tiles in the $i$-th metatile, and
$l_i$ is the length of the $i$-th metatile.  The following theorem
(which is a generalization of Combinatorial Theorem~12 in \cite{BQ=03})
relates the coefficients of $p_n(x)$ to the $n$-th row of the
triangle whose $(n,k)$-th entry is the number of tilings of an
$n$-board that use $k$ special tiles.

\begin{thm}\label{T:gpoly}
Let $p(n,k)=[x^k]p_n(x)$ and let $t(n,k)$ be the number of tilings of an
$n$-board that use exactly $k$ special tiles. Then $p(n,k)=t(n,k)$.
\end{thm}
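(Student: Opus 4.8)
The plan is to show that $p(n,k)$ and $t(n,k)$ satisfy the same recurrence with the same initial conditions, so that they agree for all $n,k$. First I would set up the induction on $n$. For the base case, note that the only tiling of the $0$-board is the empty tiling, which uses zero special tiles, so $t(0,k)=\delta_{k,0}$; on the other side, $p_0(x)=\delta_{0,0}+\sum_i x^{s_i}p_{-l_i}(x)=1$ since each $l_i\ge1$ forces $p_{-l_i}=0$, hence $p(0,k)=[x^k]1=\delta_{k,0}$. The two agree. (One should also note $t(n,k)=p(n,k)=0$ for $n<0$, trivially.)

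Next, for the inductive step with $n\ge1$, I would give a combinatorial decomposition of an arbitrary tiling of the $n$-board according to its \emph{first metatile}. Since every tiling of an integer-length board decomposes uniquely into metatiles, a tiling of the $n$-board is precisely a choice of a first metatile --- say the $i$-th type, occupying the leftmost $l_i$ cells and contributing $s_i$ special tiles --- followed by an arbitrary tiling of the remaining $(n-l_i)$-board. Summing over the $M$ metatile types and sorting by the number of special tiles used in the tail, this gives $t(n,k)=\sum_{i=1}^M t(n-l_i,\,k-s_i)$ for $n\ge1$ (with the convention that a metatile with $l_i>n$ contributes nothing, which is automatic since $t$ vanishes on negative first argument). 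Meanwhile, extracting $[x^k]$ from the defining recurrence \eqref{e:gpoly} for $n\ge1$ gives $p(n,k)=\sum_{i=1}^M [x^{k}] x^{s_i} p_{n-l_i}(x)=\sum_{i=1}^M p(n-l_i,\,k-s_i)$, the identical recurrence. By the induction hypothesis each $t(n-l_i,k-s_i)=p(n-l_i,k-s_i)$, and therefore $t(n,k)=p(n,k)$.

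The one point that needs care --- and the only real obstacle --- is justifying that the ``first metatile'' decomposition is a genuine bijection, i.e.\ that every tiling has a well-defined leftmost metatile and that peeling it off leaves a legitimate independent tiling of the shorter board. This is exactly the content of the fact (attributed to \cite{Edw08}) that any tiling of an integer-length board can be written uniquely as a concatenation of metatiles: the leftmost metatile is the shortest initial segment of cells that is exactly covered by a sub-collection of tiles forming no smaller metatile, and by definition it cannot interact with anything to its right, so the remainder is freely an arbitrary tiling of the $(n-l_i)$-board. Once this is invoked, the rest is the routine two-line induction above. I would also remark that the case where $p_n$ reduces to the square-and-domino generating polynomial of \cite[Comb.\ Thm.~12]{BQ=03} is recovered by taking the metatiles to be the free square (length $1$, $0$ special) and the domino (length $2$, $1$ special), confirming that Theorem~\ref{T:gpoly} is the promised generalization.
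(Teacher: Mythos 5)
Your proof is correct and follows essentially the same route as the paper: both extract the coefficient recurrence $p(n,k)=\delta_{n,0}\delta_{k,0}+\sum_i p(n-l_i,k-s_i)$ from \eqref{e:gpoly} and show $t(n,k)$ satisfies the identical recurrence by conditioning on a boundary metatile (you peel off the first, the paper peels off the last --- a mirror-image distinction with no substance). Your added care about the base case and the uniqueness of the metatile decomposition is a reasonable elaboration of what the paper leaves implicit, but it is not a different argument.
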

\begin{proof}
Substituting $p_n(x)=\sum_{k=0}^\infty p(n,k)x^k$ into \eqref{e:gpoly} gives
\[
p_n(x)=\delta_{n,0}+\sum_{i=1}^M
\sum_{k=0}^\infty p(n-l_i,k)x^{k+s_i}
=\delta_{n,0}+\sum_{i=1}^M\sum_{k=s_i}^\infty p(n-l_i,k-s_i)x^k.
\]
Since $p(n,k)=0$ if $k<0$, the sum over $k$ can instead start from
zero. Then equating coefficients of $x^k$ gives
\[
p(n,k)=\delta_{n,0}\delta_{k,0}+\sum_{i=1}^Mp(n-l_i,k-s_i).
\]
For a tiling of an $n$-board containing $k$ special tiles, if the
final metatile has length $l$ and contains $s$ special tiles then
there are $t(n-l,k-s)$ 
ways to tile the rest of the board. Hence, summing over all possible metatiles, 
\[
t(n,k)=\delta_{n,0}\delta_{k,0}+\sum_{i=1}^Mt(n-l_i,k-s_i),
\]
where we regard there as being one way to tile a 0-board with no
special tiles. Since there are no ways to tile an $n$-board with with $k$
dominoes if $k<0$, we also have $t(n,k)=0$ if $k<0$.
\end{proof}

\begin{example}\label{E:poly}
When tiling an $n$-board with squares and $(\frac12,1)$-fences there
are $M=3$ types of metatile, namely, a square by itself, a fence with
its gap filled by a square, and three interlocking fences
\cite{Edw08}. If we regard the fence as the special tile, in this case
we have $l_1=1$, $l_2=2$, $l_3=3$, $s_1=0$, $s_2=1$, and
$s_3=3$. Hence row $n$ in the triangle whose $(n,k)$-th entry is the
number of tilings of an $n$-board that use $k$ $(\frac12,1)$-fences
(\seqnum{A157897}) gives the coefficients of one form of tribonacci
polynomial $t_n(x)$ which is given by
\[
t_n(x)=\delta_{n,0}+t_{n-1}(x)+xt_{n-2}+x^3t_{n-3},\quad t_{n<0}(x)=0.
\]
\end{example}

\begin{remark}
One can of course obtain an analogous polynomial to \eqref{e:gpoly} and
and an analogous result to Theorem~\ref{T:gpoly}
by instead considering $n$-tile tilings of a board in which case $l_i$
is then the number of tiles that the $i$-th metatile contains.
\end{remark}

We define the Fibonacci polynomials $f_n(x)$ by
\begin{equation}\label{e:f(x)}
f_n(x)=f_{n-1}(x)+xf_{n-2}(x)+\delta_{n,0}, \quad f_{n<0}(x)=0.
\end{equation}  
Note that Fibonacci polynomials are often instead defined as
$\bar{f}_n(x)=x\bar{f}_{n-1}(x)+\bar{f}_{n-2}(x)+\delta_{n,0}$ which
leads to a different family of polynomials (see, e.g., p.141 in
\cite{BQ=03}). The definition we use here gives $f_0(x)=f_1(x)=1$, $f_2(x)=1+x$,
$f_3(x)=1+2x$, $f_4(x)=1+3x+x^2$, $f_5(x)=1+4x+3x^2$, etc. 
It is clear from the definition that $\deg f_n(x)=\floor{n/2}$.
Notice also that
putting $x=1$ gives the sum of the coefficients and hence for both
definitions, the sum of the coefficients is the Fibonacci
number $f_n$. 

The following Lemma (which is analogous to Combinatorial Theorem~12
concerning $\bar{f}_n(x)$ in \cite{BQ=03}) relates tilings of an
$n$-board using squares and dominoes to the coefficients of $f_n(x)$.

\begin{lemma}\label{L:f=t}  
Let $f(n,k)=[x^k]f_n(x)$ and $t(n,k)$ be the number of tilings of an
$n$-board with squares and dominoes that use exactly $k$ dominoes. Then
$f(n,k)=t(n,k)$.
\end{lemma}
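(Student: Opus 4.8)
The plan is to apply Theorem~\ref{T:gpoly} directly, specializing the general setup to the case of tilings with squares and dominoes. First I would identify the relevant metatiles: when tiling an $n$-board with squares and dominoes, the set of possible metatiles is simply $\{\text{square}, \text{domino}\}$, so $M=2$. Taking the domino as the special tile, I would set $l_1=1$, $s_1=0$ (the square: length $1$, zero special tiles) and $l_2=2$, $s_2=1$ (the domino: length $2$, one special tile). Substituting these values into the general polynomial recursion \eqref{e:gpoly} gives $p_n(x)=\delta_{n,0}+x^0p_{n-1}(x)+x^1p_{n-2}(x)=\delta_{n,0}+p_{n-1}(x)+xp_{n-2}(x)$ with $p_{n<0}(x)=0$, which is exactly the defining recursion \eqref{e:f(x)} for the Fibonacci polynomials $f_n(x)$. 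Hence $p_n(x)=f_n(x)$ for all $n\ge0$, and the conclusion $f(n,k)=t(n,k)$ follows immediately from Theorem~\ref{T:gpoly}.

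Alternatively, and perhaps more in keeping with the combinatorial spirit of the paper, I would give a direct proof by strong induction on $n$ that mirrors the proof of Theorem~\ref{T:gpoly}. The base case $n=0$ is the empty tiling: $f_0(x)=1$ and there is exactly one tiling of the $0$-board (using zero dominoes), so $f(0,k)=t(0,k)=\delta_{k,0}$. For the inductive step, I would condition on the last tile of an $n$-board tiling with exactly $k$ dominoes: if it is a square, the remainder is an $(n-1)$-board tiling with $k$ dominoes, contributing $t(n-1,k)$; if it is a domino, the remainder is an $(n-2)$-board tiling with $k-1$ dominoes, contributing $t(n-2,k-1)$. Thus $t(n,k)=t(n-1,k)+t(n-2,k-1)$ for $n\ge1$ (with the convention that these vanish when an index is negative). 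On the polynomial side, extracting $[x^k]$ from \eqref{e:f(x)} gives $f(n,k)=f(n-1,k)+f(n-2,k-1)$ for $n\ge1$. The two recurrences and the matching base case give $f(n,k)=t(n,k)$ by induction.

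There is essentially no hard part here; the statement is a routine specialization of the already-established Theorem~\ref{T:gpoly} (or the already-cited Combinatorial Theorem~12 of \cite{BQ=03}), and the only thing to be careful about is bookkeeping the boundary conditions $f_{n<0}(x)=0$ and $t(n,k)=0$ for $k<0$ so that the two recursions genuinely agree for all relevant $(n,k)$. I would present the short metatile-substitution argument as the main proof, since it reuses the machinery the paper has just built, and perhaps remark that it can equivalently be seen by the direct last-tile decomposition above.
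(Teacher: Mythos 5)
Your main argument is exactly the paper's proof: Lemma~\ref{L:f=t} is obtained as the special case of Theorem~\ref{T:gpoly} with metatiles the square ($l_1=1$, $s_1=0$) and domino ($l_2=2$, $s_2=1$) and the domino as the special tile, so that \eqref{e:gpoly} reduces to \eqref{e:f(x)}. The proposal is correct, and the additional direct induction is a fine but unnecessary supplement.
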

\begin{proof}
This and the corresponding polynomial \eqref{e:f(x)} is a particular
case of Theorem~\ref{T:gpoly} and \eqref{e:gpoly} where the metatiles
are the square and domino and the special tile is the domino.
\end{proof}

Note that when tiling an $n$-board with exactly $k$ dominoes there
will be $n-k$ tiles in total. Counting the ways to place the $k$
dominoes gives $f(n,k)=\tbinom{n-k}{k}$ which is \seqnum{A011973}, and
coefficients of ascending powers of $x$ in $f_n(x)$ therefore give the $n$th
antidiagonal of Pascal's triangle (\seqnum{A007318}), counting the initial 1
in the triangle as the 0th antidiagonal. The following theorem is a
generalization of this result.

\begin{thm}\label{T:poly}
For $j\ge0$, $k\ge0$, $m\ge1$, and  $r=0,\ldots,m-1$,
\begin{equation}\label{e:poly} 
\chb{mj+r-k}{k}_m=[x^k]f_j^{m-r}(x)f_{j+1}^r(x).
\end{equation} 
\end{thm}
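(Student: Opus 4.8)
The plan is to reduce the identity to counting square-and-domino tilings of a tuple of ordinary boards, using the tools already in place. First I would apply Identity~\ref{I:ch=chb} with $n=mj+r$ to rewrite the left-hand side as $\ch{mj+r}{k}_m$, the number of tilings of an $(mj+r)$-board that use $k$ $(1,m-1)$-fences and $mj+r-2k$ squares; it then suffices to show this equals $[x^k]f_j^{m-r}(x)f_{j+1}^r(x)$.

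Next I would invoke Lemma~\ref{L:bij} (the hypothesis $r\in\{0,\ldots,m-1\}$ lies inside its allowed range $r\in\{0,\ldots,m\}$), which gives a bijection between those fence tilings and the tilings of the ordered $m$-tuple consisting of $r$ boards of length $j+1$ followed by $m-r$ boards of length $j$, where now $k$ dominoes and $mj+r-2k$ squares are used in total. Hence $\ch{mj+r}{k}_m$ is exactly the number of ways to tile this tuple of boards with squares and dominoes using $k$ dominoes altogether.

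The remaining step is routine generating-function bookkeeping: a tiling of a disjoint collection of boards amounts to an independent choice of tiling on each board, and the total domino count is the sum of the domino counts on the pieces, so the generating function in the variable $x$ marking dominoes is the product of the individual boards' generating functions. By Lemma~\ref{L:f=t} a single board of length $\ell$ contributes $f_\ell(x)$, so the tuple contributes $f_{j+1}^r(x)f_j^{m-r}(x)$, and extracting $[x^k]$ yields the count from the previous paragraph. I do not expect a genuine obstacle; the only points worth a sentence are this multiplicativity over a disjoint union of boards and the harmless edge cases $j=0$ (empty $j$-boards, $f_0=1$) and $k=0$, both of which are consistent with the formula.
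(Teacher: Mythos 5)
Your proposal is correct and follows essentially the same route as the paper: Identity~\ref{I:ch=chb}, then Lemma~\ref{L:bij} to pass to square-and-domino tilings of the $m$-tuple of boards, then Lemma~\ref{L:f=t}. The only difference is cosmetic---the paper writes out the convolution sum over $k_1+\cdots+k_m=k$ explicitly, whereas you appeal to multiplicativity of the domino-counting generating function over a disjoint union of boards, which amounts to the same computation.
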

\begin{proof}
From Identity~\ref{I:ch=chb},
$\tch{mj+r}{k}_m=\tchb{mj+r-k}{k}_m$. From Lemma~\ref{L:bij}, $\tch{mj+r}{k}_m$
equals the number of ways to tile an ordered $m$-tuple of $r$
$(j+1)$-boards followed by $m-r$ $j$-boards using $k$ dominoes (and
$mj+r-2k$ squares). The number of such tilings of the $m$-tuple of
boards is
\[
\sum_{\substack{k_1\ge0,\,k_2\ge0,\,\ldots,\,k_m\ge0,\\k_1+k_2+\cdots+k_m=k}}
\Biggl(\prod_{i=1}^r t(j+1,k_i)\Biggr)
\Biggl(\prod_{i=r+1}^m t(j,k_i)\Biggr)
\]
in which the first product is omitted when $r=0$.
The coefficient of $x^k$ in $f_{j+1}^r(x)f_j^{m-r}(x)$ is
\begin{align*}  
&[x^k]
\Biggl(\prod_{i=1}^r \sum_{k_i=0}^{\floor{(j+1)/2}}f(j+1,k_i)x^{k_i}\Biggr)
\Biggl(\prod_{i=r+1}^m \sum_{k_i=0}^{\floor{j/2}}f(j,k_i)x^{k_i}\Biggr)\\
&\qquad=
[x^k]\sum_{k_1\ge0,k_2\ge0,\ldots,k_m\ge0}\Biggl(\prod_{i=1}^r f(j+1,k_i)\Biggr)
\Biggl(\prod_{i=r+1}^m f(j,k_i)\Biggr)x^{k_1+k_2+\cdots+k_m}\\
&\qquad=
\sum_{\substack{k_1\ge0,\,k_2\ge0,\,\ldots,\,k_m\ge0,\\k_1+k_2+\cdots+k_m=k}}
\Biggl(\prod_{i=1}^r f(j+1,k_i)\Biggr)
\Biggl(\prod_{i=1}^r f(j,k_i)\Biggr).
\end{align*}
The result then follows from Lemma~\ref{L:f=t}.  
\end{proof}

Our first identity, which gives the sums of the antidiagonals, 
follows immediately from Theorem~\ref{T:poly}.
\begin{idn}\label{I:adiagsum} For $j\ge0$, $m\ge1$, $r=0,\ldots,m-1$, 
\[
\sum_{k=0}^{\floor{(mj+r)/2}}\chb{mj+r-k}{k}_m=f_j^{m-r}f_{j+1}^r.
\]
\end{idn}

\section{Tiling and restricted combinations}\label{s:comb}

We now turn to the problem of determining an expression for
$S^{(m)}(n,k)$, the number of subsets of $\Nset_n=\{1,\ldots,n\}$
of size $k$ such that the difference of any two elements of the subset
does not equal $m$. For example, $S^{(1)}(3,0)=S^{(1)}(3,2)=1$ and
$S^{(1)}(3,1)=3$ since the possible subsets of $\{1,2,3\}$ satisfying
the $m=1$ restriction are $\{\},\{1\},\{2\},\{3\}$, and $\{1,3\}$.  It
has been established that $S^{(1)}(n,k)=\tbinom{n+1-k}{k}$
\cite{Kap43}, and there is a formula for $S^{(m)}(n,k)$ in terms of
sums of products of binomial coefficients \cite{Pro83} along with one
in terms of products of powers of consecutive Fibonacci numbers for
the number of subsets of $\Nset_n$ of all sizes for a given $m$
\cite{KL91c}. Here we will show that $S^{(m)}(n,k)=\tchb{n+m-k}{k}_m$
and hence obtain the latter of these previous results via
combinatorial proof. We first establish the following bijection.

\begin{lemma}\label{L:ksub}
There is a bijection between the $k$-subsets of $\Nset_n$ such that
all pairs of elements taken from a subset do not differ by $m$, and
the tilings of an $(n+m)$-board with $k$ $(1,m-1)$-fences and $n+m-2k$
squares. 
\end{lemma}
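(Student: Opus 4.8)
The plan is to construct an explicit bijection. Given a $k$-subset $S\subseteq\Nset_n=\{1,\dots,n\}$ with no two elements differing by $m$, I will build a tiling of an $(n+m)$-board as follows. Think of the cells of the $(n+m)$-board as being split into $m$ "tracks": cell $c$ (for $1\le c\le n+m$) belongs to track $c\bmod m$ and is the $\lceil c/m\rceil$-th cell on that track. The condition that no two elements of $S$ differ by $m$ says precisely that $S$ never contains two \emph{consecutive} cells of the same track. The idea is that each element $a\in S$ will cause a $(1,m-1)$-fence whose first post occupies cell $a$ and whose second post occupies cell $a+m$ (the next cell on the same track); the gap of the fence spans the $m-1$ cells strictly between, which lie on the other $m-1$ tracks. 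Cells not covered by any post get a unit square. The "no two elements differ by $m$" hypothesis is exactly what guarantees that these posts never collide: if $a,a+m\in S$ then the second post of $a$'s fence and the first post of $(a+m)$'s fence would both sit on cell $a+m$.

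The steps, in order, would be: (1) verify the map is well-defined, i.e. that the posts and squares assembled above really do form a legal tiling of the $(n+m)$-board — each cell is covered exactly once, and every fence has its two posts exactly $m-1$ cells apart; this uses the no-difference-$m$ condition as just described. (2) Count: a $k$-subset produces exactly $k$ fences and hence $n+m-2k$ squares, matching the claim. (3) Construct the inverse: given a tiling of the $(n+m)$-board with $k$ fences and $n+m-2k$ squares, read off, for each fence, the position $a$ of its first post, and let $S$ be the set of these $a$'s; then argue $S\subseteq\Nset_n$ (the first post of a fence cannot lie in the last $m$ cells, since its partner post must fit), that $|S|=k$, and that no two elements of $S$ differ by $m$ (two fences cannot interlock on the same track in the same direction without their posts overlapping). (4) Check the two maps are mutually inverse, which is immediate once both directions are spelled out.

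The main obstacle, and the point needing the most care, is the well-definedness in step (1) together with the injectivity of the first-post-reading map in step (3): one must be sure that distinct fences always have distinct first-post cells and that the fences tile without overlap. The subtlety is that a fence's \emph{second} post could in principle land on a cell that another fence wants for its \emph{first} post; translating this back through the track picture, this happens iff two subset elements differ by $m$, so the hypothesis rules it out — but I would want to phrase the argument so it is manifestly an if-and-only-if, so that both the forward map and its inverse are covered by the same observation. It may be cleanest to set this up by first describing the inverse (tiling $\to$ subset) map, noting that a tiling canonically decomposes into metatiles, and that within each "track" the fences behave like dominoes; indeed, via Lemma~\ref{L:bij} with $j=\lfloor n/m\rfloor$ and $r=n\bmod m$, the tilings in question biject with placements of $k$ non-overlapping dominoes on an $m$-tuple of boards, and non-overlapping dominoes on a single path of length $\ell$ correspond exactly to $k$-subsets of its vertices with no two consecutive — which reassembles into the no-difference-$m$ condition on $\Nset_n$. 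Either route works; I would present the direct "tracks" bijection as the main argument and mention the Lemma~\ref{L:bij} route as an alternative, since it makes the counting of fences versus squares automatic.
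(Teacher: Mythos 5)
Your proposal is correct and is essentially the paper's own proof: the paper likewise places a fence with left post on cell $i$ for each element $i$ of the subset, fills the rest with squares, and inverts by reading off left-post positions, with the collision argument (right post at $i+m$ versus another left post) being exactly your difference-$m$ observation. The ``tracks'' picture and the alternative route via Lemma~\ref{L:bij} are harmless extra framing but add nothing the direct argument does not already give.
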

\begin{proof}
We label the cells of the $(n+m)$-board from 1 to $n+m$. If a
$k$-subset contains element $i$ then we place a fence so that its left
post occupies cell $i$. Notice that if $i=n$ then the right post
occupies the final cell on the board. After placing fences
corresponding to each element of the subset, the rest of the board is
filled with squares of which there must be $n+m-2k$. In reverse, the
tiling of any $(n+m)$-board tiled with $k$ fences will generate a
$k$-subset where no two elements differ by $m$ since the right post of
a fence starting at cell $i$ is on cell $i+m$ which means it cannot be
occupied by the left post of another fence.
\end{proof}

\begin{cor}
$S^{(m)}(n,k)=\tchb{n+m-k}{k}_m$.
\end{cor}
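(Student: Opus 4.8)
The plan is to derive the corollary directly from Lemma~\ref{L:ksub} together with Identity~\ref{I:ch=chb}, so essentially all the work has already been done and only a short assembling argument remains. First I would invoke Lemma~\ref{L:ksub}: it gives a bijection between the $k$-subsets of $\Nset_n$ in which no two elements differ by $m$ and the tilings of an $(n+m)$-board using $k$ $(1,m-1)$-fences and $n+m-2k$ squares. Counting both sides, $S^{(m)}(n,k)$ equals the number of such tilings, which by the definition of $\tch{n}{k}_m$ (tilings of an $N$-board with $k$ fences and $N-2k$ squares) is exactly $\tch{n+m}{k}_m$.

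Next I would apply Identity~\ref{I:ch=chb}, namely $\ch{N}{k}_m=\chb{N-k}{k}_m$ valid for $N\ge k\ge0$, with $N=n+m$; this is legitimate since $n+m\ge k$ whenever a $k$-subset of $\Nset_n$ exists (indeed $k\le n\le n+m$, and in the degenerate range the identity still holds with both sides zero). This yields $\tch{n+m}{k}_m=\chb{n+m-k}{k}_m$, and chaining the two equalities gives $S^{(m)}(n,k)=\chb{n+m-k}{k}_m$ as claimed.

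There is no real obstacle here; the only point requiring a word of care is the range of validity — one should note that the argument covers all $n\ge0$ and $0\le k\le n$ (or more generally all $k\ge0$, with both sides vanishing when $k>n$), so that the hypothesis $N\ge k\ge0$ of Identity~\ref{I:ch=chb} is met. If one wished to avoid even citing Identity~\ref{I:ch=chb}, one could instead observe directly from the length count that $k$ fences plus $n+m-2k$ squares make up $k$ tiles plus $n+m-2k$ tiles, i.e.\ $n+m-k$ tiles in total of which $k$ are fences, which is by definition $\chb{n+m-k}{k}_m$; this is really the same one-line observation. Either way the proof is immediate.

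\begin{proof}
By Lemma~\ref{L:ksub}, $S^{(m)}(n,k)$ equals the number of tilings of an $(n+m)$-board using $k$ $(1,m-1)$-fences and $n+m-2k$ squares, which is $\tch{n+m}{k}_m$. Such a tiling uses $k+(n+m-2k)=n+m-k$ tiles in total, of which $k$ are fences, so it is counted by $\tchb{n+m-k}{k}_m$ (equivalently, apply Identity~\ref{I:ch=chb} with $N=n+m\ge k$). Hence $S^{(m)}(n,k)=\tchb{n+m-k}{k}_m$.
\end{proof}
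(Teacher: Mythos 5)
Your proof is correct and matches the paper's own argument exactly: both invoke Lemma~\ref{L:ksub} to identify $S^{(m)}(n,k)$ with $\tch{n+m}{k}_m$ and then apply Identity~\ref{I:ch=chb} to convert to $\tchb{n+m-k}{k}_m$. The extra remarks on the range of validity and the direct tile-count alternative are fine but not needed.
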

\begin{proof}
From Lemma~\ref{L:ksub}, $S^{(m)}(n,k)=\tch{n+m}{k}_m$.
Identity~\ref{I:ch=chb} then gives the result.
\end{proof}

The next two corollaries follow from Theorem~\ref{T:poly} and
Identity~\ref{I:adiagsum}, respectively.

\begin{cor}
For $j\ge0$, $m\ge1$, $r=0,\ldots,m-1$, 
$S^{(m)}(mj+r,k)=[x^k]f_{j+1}^{m-r}(x)f_{j+2}^r(x)$.
\end{cor}

\begin{cor}
For $j\ge0$, $m\ge1$, $r=0,\ldots,m-1$, 
the number of subsets of $\Nset_{mj+r}$ each of which
lack pairs of elements that differ by $m$ is $f_{j+1}^{m-r}f_{j+2}^r$.
\end{cor}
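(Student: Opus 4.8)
The plan is to derive this as a one-line consequence of Identity~\ref{I:adiagsum}, by summing the immediately preceding corollary over all subset sizes. First I would note that the number of subsets of $\Nset_{mj+r}$ with no two elements differing by $m$ is, by definition, $\sum_{k\ge0}S^{(m)}(mj+r,k)$. Using the corollary $S^{(m)}(n,k)=\tchb{n+m-k}{k}_m$ together with the arithmetic fact $mj+r+m=m(j+1)+r$ (and that $r$ still lies in the admissible range $0,\ldots,m-1$), this sum becomes $\sum_{k\ge0}\tchb{m(j+1)+r-k}{k}_m$.

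Next I would observe that $\tchb{m(j+1)+r-k}{k}_m=0$ once $k>\floor{(m(j+1)+r)/2}$, since a tiling of an $(m(j+1)+r)$-board cannot contain more fences than that; hence the sum over all $k\ge0$ is exactly the finite sum in Identity~\ref{I:adiagsum} with $j$ replaced by $j+1$. That identity then yields $f_{j+1}^{m-r}f_{j+2}^r$, which is the claimed value.

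The only point needing care is the index shift $j\mapsto j+1$: it arises because the bijection of Lemma~\ref{L:ksub} allows the left post of a fence to occupy cell $n$, so the relevant board has length $n+m$ rather than $n$, turning $mj+r$ into $m(j+1)+r$. I do not expect any genuine obstacle here; once the shift is tracked the result is immediate. As an alternative self-contained route, one could instead set $x=1$ in the previous corollary's polynomial identity $S^{(m)}(mj+r,k)=[x^k]f_{j+1}^{m-r}(x)f_{j+2}^r(x)$ and use $f_n(1)=f_n$, which is the same computation written multiplicatively and recovers the Fibonacci-product formula of \cite{KL91c} combinatorially.
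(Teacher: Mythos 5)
Your argument is correct and is essentially the paper's own proof: the paper derives this corollary directly from Identity~\ref{I:adiagsum} via $S^{(m)}(n,k)=\tchb{n+m-k}{k}_m$ and the same index shift $mj+r+m=m(j+1)+r$. Your handling of the shift and of the vanishing of terms with large $k$ is exactly the right bookkeeping, so nothing further is needed.
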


\section{Metatiles when tiling with squares and fences}\label{s:meta}

The simplest metatiles are the free square ($S$), $m$ interlocking
fences with no gaps ($F^m$) which we will refer to as an
\textit{$m$-fence} (since the $m=2$ and $m=3$ cases have already been
referred to as bifences \cite{EA19,EA21} and trifences
\cite{Edw08,EA20a}, respectively), and, for $m>1$ and
$r=1,\ldots,m-1$, the \textit{filled $r$-fence} ($F^rS^{m-r}$) which
is $r$ interlocking fences with the remaining gap filled with squares.
We refer to a filled 1-fence ($FS^{m-1}$) simply as a filled fence.
Note that a 1-fence is just a domino, and that $S$ and $F^{m-1}S$ are
the only metatiles that contain a single square.

\begin{figure}[!h]
\begin{center}
\includegraphics[width=15cm]{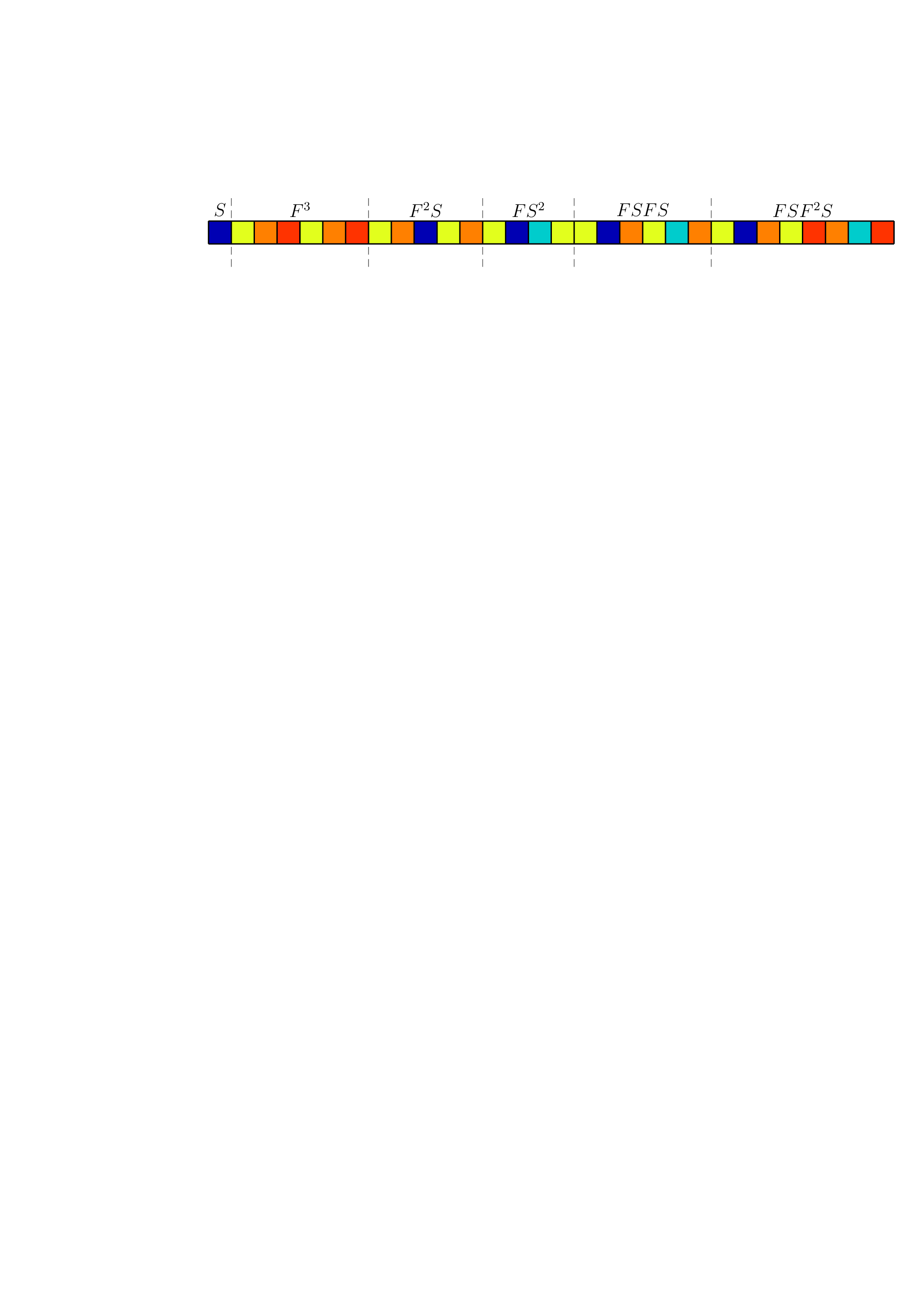}
\end{center}
\caption{A 30-board tiled with all metatiles containing less than 6 tiles
  in the $m=3$ case. Dashed lines show boundaries between
  metatiles. The symbolic representation is above each metatile.}
\label{f:meta}
\end{figure}

When $m=1$, the only metatiles are the two individual tiles
themselves: a square and a domino. When $m=2$, the metatiles are $S$,
$FS$, and $F^2$ \cite{EA21}. For $m>2$, in each case there are an
infinite number of metatiles. However, when $m=3$
(Fig.~\ref{f:meta}), aside from the simplest metatiles ($S$, $F^3$,
$F^2S$, and $FS^2$) there is just one infinite sequence of metatiles,
namely, $FSF^{j-1}S$ for $j>1$. To see this, notice that $FS$ has a
single remaining slot of unit width. This can be either filled with an
$S$, which then completes the metatile, or with an $F$ which again
results in a unit-width slot at the end of the yet-to-be-completed
metatile.

\section{Further identities concerning entries in the $n$-tile tilings
triangles}
\label{s:idn}

These first three identities follow immediately by considering
properties of the simplest metatiles.

\begin{idn}
For $n\ge0$ and $m\ge1$, $\tchb{n}{0}_m=1$.
\end{idn}
\begin{proof}
There is only one way to create an $n$-tile tiling without using any
$(1,m-1)$-fences: the all-square tiling.
\end{proof}

\begin{idn}
For $n\ge1$ and $m\ge1$, 
\[
\chb{n}{1}_m=\begin{cases}0,&n<m;\\n-m+1,&n\ge m.\end{cases}
\]
\end{idn}
\begin{proof}
Any $n$-tile tiling using exactly 1 fence must have a filled fence which
itself contains $m$ tiles. Thus there can be no $n$-tile tilings using 1
fence that use less than $m$ tiles. If $n\ge m$, the tiling consists
of a filled fence and $n-m$ free squares which gives a total of
$n-m+1$ metatile positions in which the filled fence can be placed.
\end{proof}

\begin{idn}
For $j\ge0$, $m\ge1$, and $r=0,\ldots,m-1$,  
$\tchb{n}{n}_m=\delta_{n\bmod m,0}$.
\end{idn}
\begin{proof}
The only way to tile without squares is the all $m$-fence tiling which
can only occur if the number of tiles is a multiple of $m$.
\end{proof}

The pattern of zeros seen in the triangles is a result of the
following identity.
\begin{idn}
For $j\ge1$, $m\ge1$, $p=0,\ldots,m-1$, and $r=1,\ldots,p$, 
\[
\chb{mj-r}{mj-p}_m=0.
\]
\end{idn}
\begin{proof}
We first derive an expression for $K$, the maximum number of fences
that can be used in the tiling of an $(mJ+R)$-board where
$R=0,\ldots,m-1$.  From Lemma~\ref{L:bij}, $K$ is also the maximum
number of dominoes that can be used in the tiling of $R$
$(J+1)$-boards and $m-R$ $J$-boards. Then it is easily seen that
\[
K=\begin{cases}\tfrac12mJ, & \text{$J$ even};\\
 \tfrac12m(J-1)+R, & \text{$J$ odd}.\end{cases}
\]
From Identity~\ref{I:ch=chb}, 
\[
\chb{mj-r}{mj-p}_m=\ch{2mj-r-p}{mj-p}_m.
\]
If $r+p>m$, then $2mj-r-p=mJ+R$ where $J=2(j-1)$ and $R=2m-r-p$. Then
$K=m(j-1)$ which is always less than $mj-p$. If $r+p\le m$ then
$2mj-r-p=mJ+R$ where $J=2j-1$ and $R=m-r-p$ and so $K=mj-p-r$ which is
also always less than $mj-p$.    
\end{proof}

The following identity accounts for the rising and falling
powers of ascending positive integers that form the right 
boundary of the nonzero parts of the triangles.
\begin{idn}
For $j\ge1$, $m\ge1$, and $p=0,\ldots,m$, 
\[
\chb{m(j-1)+p}{m(j-1)}_m=\chb{mj}{mj-p}_m=j^p.
\]
\end{idn}
\begin{proof}
From Identity~\ref{I:ch=chb},
\[ 
\chb{m(j-1)+p}{m(j-1)}_m=\ch{2m(j-1)+p}{m(j-1)}_m.
\]
By
Lemma~\ref{L:bij} this is the number of ways to tile $m-p$
boards of length $2(j-1)$ and $p$ boards of length $2(j-1)+1$ with $m(j-1)$
dominoes and $p$ squares. Putting $j-1$ dominoes in each of the $m$
boards leaves room for the remaining $p$ squares in the set of $p$
longer boards. On each of these boards there are $j$ tiles and hence
$j$ ways to tile each of them leading to $j^p$ ways to tile all the boards.   
From Identity~\ref{I:ch=chb} we have
\[
\chb{mj}{mj-p}_m=\ch{2mj-p}{mj-p}_m=\ch{m(2j-1)+m-p}{(m-p)j+p(j-1)}_m,
\]
which is also the number of ways to tile $m-p$ $2j$-boards and $p$
boards of length $2j-1$ using $(m-p)j+p(j-1)$ dominoes and $p$
squares. The $2j$-boards are completely filled by $j$ dominoes and the
$p$ $(2j-1)$-boards each have $j-1$ dominoes and one square which can
be placed in $j$ positions leading again to a total of $j^p$ tilings
for the set of boards.  
\end{proof}

\begin{idn}
For $j\ge1$ and $m\ge1$, 
\[
\chb{mj+1}{mj-1}_m=mT_j,
\]
where $T_j=j(j+1)/2$ is the $j$th triangle number (\seqnum{A000217}).
\end{idn}
\begin{proof}
From Identity~\ref{I:ch=chb}, $\tchb{mj+1}{mj-1}_m=\tch{2mj}{mj-1}$,
which, from Lemma~\ref{L:bij}, is the number of ways to tile an
$m$-tuple of $2j$-boards with $mj-1$ dominoes and 2 squares. As the
boards are of even length, both squares must lie on the same
board. On such a board there are $j+1$ tiles in total which means
there are $\tbinom{j+1}{2}=j(j+1)/2$ possible ways to tile it. 
As there are $m$ possible boards on which to place the two squares, the
result follows.
\end{proof}

\begin{idn}
For $j\ge1$ and $m\ge2$, 
\[
\chb{mj+2}{mj-2}_m=m\binom{j+2}{4}1_{j>1}+\binom{m}{2}\binom{j+1}{2}^2,
\]
where $1_x$ is $1$ if $x$ is true and $0$ otherwise.
\end{idn}
\begin{proof}
From Identity~\ref{I:ch=chb}, $\tchb{mj+2}{mj-2}_m=\tch{2mj}{mj-2}$,
which, from Lemma~\ref{L:bij}, is the number of ways to tile an
$m$-tuple of $2j$-boards with $mj-2$ dominoes and 4 squares.  If
$j>1$, all four squares can be on the same $2j$-board which means
there are $j+2$ tiles on that board and hence $\tbinom{j+2}{4}$
tilings of it. With $m$ boards to choose from, this gives the first term on
the right-hand side of the identity. The other possibility is that two
of the boards have two squares each. There are $\tbinom{j+1}{2}$ ways
to tile each such board and $\tbinom{m}{2}$ ways to choose the boards.
\end{proof}

We find that $\tchb{2n+2}{2n-2}_2$ is \seqnum{A006324}.

As the metatiles containing a given number of tiles are easily
enumerated, it is straightforward to obtain recursion relations for
the row sums and for the elements of the triangle in the $m=3$ case,
as shown in the proofs of the following two identities.

\begin{idn}
For all $n\in\mathbb{Z}$, $B_n=\sum_{k=0}^n\tchb{n}{k}_3$, the number of $n$-tile tilings using
squares and $(1,2)$-fences, satisfies
\begin{equation}\label{e:B3} 
B_n=\delta_{n,0}-\delta_{n,1}+2B_{n-1}-B_{n-2}+3B_{n-3}-2B_{n-4},
\quad B_{n<0}=0.
\end{equation} 
\end{idn}
\begin{proof}
We condition on the final metatile in the $n$-tile tiling. If it contains $p$ tiles then there are $B_{n-p}$ possibilities for the remaining tiles. The possible metatiles, namely, $S$, $F^3$, $F^2S$, $FS^2$, and $FSF^{j-1}S$ for $j>1$ contain 1, 3, 3, 3, and $2+j$ tiles, respectively. Summing over these gives
\begin{equation}\label{e:rs3} 
B_n=\delta_{n,0}+B_{n-1}+3B_{n-3}+\sum_{p=4}^nB_{n-p},
\end{equation}
where the $\delta_{n,0}$ is needed so that we obtain one tiling for each metatile with $p$ tiles (putting $n=p$). Subtracting \eqref{e:rs3} with $n$ replaced by $n-1$ from \eqref{e:rs3} gives the identity.
\end{proof}

\begin{idn}\label{I:rr3}
For all $n,k\in\mathbb{Z}$,
\begin{multline}\label{e:rr3}
\chb{n}{k}_3=\delta_{n,0}\delta_{k,0}-\delta_{n,1}\delta_{k,1}
+\chb{n-1}{k}_3+\chb{n-1}{k-1}_3
-\chb{n-2}{k-1}_3+\chb{n-3}{k-1}_3+\chb{n-3}{k-2}_3\\
+\chb{n-3}{k-3}_3-\chb{n-4}{k-3}_3-\chb{n-4}{k-4}_3.
\end{multline} 
\end{idn}
\begin{proof}
We count $\tchb{n}{k}$ by conditioning on the last metatile on the
board. If the metatile contains $p$ tiles of which $j$ are fences, for
the remaining tiles the number of $(n-p)$-tile tilings is
$\tchb{n-p}{k-j}$. 
Summing over all possible metatiles gives
\begin{equation}\label{e:irr3}
\chb{n}{k}_3=\delta_{n,0}\delta_{k,0}+\chb{n-1}{k}_3+\chb{n-3}{k-3}_3+
\chb{n-3}{k-1}_3+\chb{n-3}{k-2}_3+\sum_{p=4}^n\chb{n-p}{k+2-p}_3.
\end{equation} 
Replacing $n$ by $n-1$ and $k$ by $k-1$ in \eqref{e:irr3} and then
subtracting the resulting equation from \eqref{e:irr3} gives the identity.
\end{proof}

\begin{cor}
For $n\geq2k+1$ when $k\geq0$, 
\[
\chb{n}{k}_3=\chb{n-1}{k}_3+\chb{n-1}{k-1}_3.
\]
\end{cor}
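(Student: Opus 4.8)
The plan is to deduce the corollary directly from Identity~\ref{I:rr3} by recasting \eqref{e:rr3} as a self-referential recursion for a ``Pascal defect'' and then inducting on $n$. Define, for all integers $n,k$,
\[
D_{n,k}:=\chb{n}{k}_3-\chb{n-1}{k}_3-\chb{n-1}{k-1}_3,
\]
with the standing convention $\tchb{n}{k}_3=0$ whenever $n<0$, $k<0$, or $k>n$; the corollary is exactly the statement that $D_{n,k}=0$ whenever $n\ge2k+1$ and $k\ge0$.

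First I would subtract $\tchb{n-1}{k}_3+\tchb{n-1}{k-1}_3$ from both sides of \eqref{e:rr3} and regroup: the six surviving correction terms split into the triples $-\tchb{n-2}{k-1}_3+\tchb{n-3}{k-1}_3+\tchb{n-3}{k-2}_3$ and $\tchb{n-3}{k-3}_3-\tchb{n-4}{k-3}_3-\tchb{n-4}{k-4}_3$, which (reading $n-3=(n-2)-1$, $k-2=(k-1)-1$, and similarly for the second) are precisely $-D_{n-2,k-1}$ and $D_{n-3,k-3}$. Hence \eqref{e:rr3} is equivalent to
\[
D_{n,k}=\delta_{n,0}\delta_{k,0}-\delta_{n,1}\delta_{k,1}-D_{n-2,k-1}+D_{n-3,k-3}.
\]
Now strong induction on $n$ finishes it. The hypothesis $n\ge2k+1$ forces $n\ge1$, killing the first Kronecker delta, and forces $k=0$ when $n=1$, killing the second. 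The two shifted arguments inherit the hypothesis, since $n\ge2k+1$ gives $n-2\ge2(k-1)+1$ and $n-3\ge2(k-3)+1$, and both have strictly smaller first coordinate; thus for each of $(n-2,k-1)$ and $(n-3,k-3)$, either both coordinates are nonnegative and the induction hypothesis gives $D=0$, or some coordinate is negative and $D=0$ by the sign convention. The base cases $n\in\{0,1,2\}$ are trivial, because $n\ge2k+1$ then forces $k=0$ and $D_{n,0}=1-1-0=0$ using $\tchb{n}{0}_3=1$.

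I expect the only point needing care to be that last bit of bookkeeping: checking that whenever $(n,k)$ is in the allowed range, neither $(n-2,k-1)$ nor $(n-3,k-3)$ falls between the inductive range and the trivially-zero region, so that no case is missed (the ``negative coordinate'' alternative in fact occurs only for small $n$, roughly when $k\le2$). A more combinatorial line of attack --- condition an $n$-tile tiling on its first tile, observing that a leading square is necessarily a free-square metatile whose removal accounts for $\tchb{n-1}{k}_3$ --- stalls on the ``leading fence'' case, since peeling a fence off the front need not leave a legal tiling; making that case contribute exactly $\tchb{n-1}{k-1}_3$ seems to demand the very index arithmetic that the recursion above handles automatically, so I would keep to the algebraic argument.
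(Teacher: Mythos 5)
Your proof is correct and is essentially the paper's own argument: your defect $D_{n,k}$ is exactly the paper's operator $P(n,k)$, and you derive the identical recursion $P(n,k)=\delta_{n,0}\delta_{k,0}-\delta_{n,1}\delta_{k,1}-P(n-2,k-1)+P(n-3,k-3)$ from Identity~\ref{I:rr3}. The only difference is presentational --- you close the argument by strong induction on $n$ where the paper unwinds the recursion in $k$ until the second argument is nonpositive; your bookkeeping is if anything a little cleaner (the $n=0$ base case is vacuous rather than giving $D_{0,0}=0$, but nothing depends on that).
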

\begin{proof}
We define what we might call a Pascal's recurrence operator by
\[
P(n,k)=\chb{n}{k}_3-\chb{n-1}{k}_3-\chb{n-1}{k-1}_3.
\]
Notice that $P(n,k)=0$ if the $(n,k)$-th entry of the triangle is the sum
of the entry directly above it and the entry above and one place
to the left.  It can be seen that $P(n,k<0)=0$ and $P(n>0,0)=0$. 
Rewriting \eqref{e:rr3} in terms of $P(n,k)$ gives
\[
P(n,k)=\delta_{n,0}\delta_{k,0}-\delta_{n,1}\delta_{k,1}-P(n-2,k-1)+P(n-3,k-3),
\]
and hence for $k>0$ and $r\ge0$,
\[
P(2k+1+r,k)=-P(2(k-1)+1+r,k-1)+P(2k-2+r,k-3).
\]
Applying this recursively until the second argument of $P$ is zero or
negative in each case, we see that the first term on the right-hand
side will eventually generate $(-1)^kP(1+r,0)$ and all other terms
will be of the form $\sigma P(a,0)$ for various $a>1+r$ or $\sigma
P(a,b)$ with $b\in\{-1,-2\}$ where $\sigma$ is 1 or $-1$. Hence
$P(2k+1+r,k)=0$ for $k\ge0$ which is equivalent to the result we wish
to prove.
\end{proof}

The following conjecture has been shown to be true for the cases $m=1,2,3$.
\begin{conj}
For $n\geq(m-1)k+1$ when $k\ge0$, 
\[
\chb{n}{k}_m=\chb{n-1}{k}_m+\chb{n-1}{k-1}_m.
\]
\end{conj}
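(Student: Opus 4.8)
The plan is to generalize the argument used for the $m=3$ case (the Corollary following Identity~\ref{I:rr3}) to arbitrary $m$. First I would establish a general recursion of the type \eqref{e:irr3} for $\tchb{n}{k}_m$ by conditioning on the final metatile. By the discussion in \S\ref{s:meta}, when tiling with squares and $(1,m-1)$-fences the metatiles are the free square $S$ (length $1$, $0$ fences), the $m$-fence $F^m$ (length $m$, $m$ fences), the filled $r$-fences $F^rS^{m-r}$ for $r=1,\dots,m-1$ (length $m$, $r$ fences), and a family of longer metatiles obtained by interlocking additional fences into the remaining gap. The essential structural fact I would need is that every metatile of length $\ell$ uses at least $\ell-m$ fences, equivalently contains at most $m$ squares; this follows because at most $m$ single squares can appear (only $S$ and $F^{m-1}S$ contribute single squares, and once one stops interlocking fences the metatile closes off), and each fence contributes length $2$ while covering only the equivalent of ``$2$ minus the overlap''. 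Summing over all metatiles, with $c_{\ell,j}$ the number of length-$\ell$ metatiles using $j$ fences, gives
\[
\chb{n}{k}_m=\delta_{n,0}\delta_{k,0}+\sum_{\ell\ge1}\sum_{j}c_{\ell,j}\chb{n-\ell}{k-j}_m .
\]

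Next I would form the Pascal operator $P(n,k)=\tchb{n}{k}_m-\tchb{n-1}{k}_m-\tchb{n-1}{k-1}_m$ and rewrite the metatile recursion in terms of $P$, subtracting the $(n,k)\to(n-1,k-1)$ shifted copy from the original. The point of this subtraction is that the free square $S$ and (more importantly) the filled fence $F S^{m-1}$, which is the metatile of length $m$ using exactly one fence, are precisely the terms that get cancelled: $S$ contributes $\tchb{n-1}{k}_m$ and $FS^{m-1}$ contributes $\tchb{n-m}{k-1}_m$, and after the shift the corresponding terms match up so that the ``$\tchb{n-1}{k}_m+\tchb{n-1}{k-1}_m$'' part is absorbed into $P$. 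What remains is an expression of the form
\[
P(n,k)=\delta_{n,0}\delta_{k,0}-\delta_{n,1}\delta_{k,1}+\sum_{\ell\ge2}\sum_{j\ge1}d_{\ell,j}\,P(n-\ell,k-j),
\]
where every metatile contributing a $P$-term on the right has length $\ell\ge2$ and, crucially, uses at least one fewer square than a ``domino-like'' tile would, i.e.\ $\ell\le m j + (\text{small correction})$; concretely each such metatile satisfies $\ell - 2j \le m-2$ roughly, so that the shift $(n,k)\mapsto(n-\ell,k-j)$ moves a point on or above the line $n=(m-1)k+1$ to a point that is strictly ``deeper'' in the region where we can induct.

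Then the proof is a double induction exactly as in the $m=3$ case: $P(n,k)=0$ for $k<0$ and for $n>0,k=0$ (left edge, all-square tilings), and for $k\ge1$, $n\ge(m-1)k+1$ one applies the $P$-recursion; each term $P(n-\ell,k-j)$ on the right either has second argument negative (hence zero), or has second argument $0$ with first argument $>0$ (hence zero by the edge case — here one must check the first argument stays positive, which is where the length bound $\ell\le m j$ is used, since $n-\ell\ge(m-1)k+1-mj\ge(m-1)(k-j)+1-j\ge 1-j$, needing a touch more care), or has a pair $(n-\ell,k-j)$ still satisfying $n-\ell\ge(m-1)(k-j)+1$ with $k-j<k$, so induction on $k$ applies. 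The main obstacle I anticipate is \textbf{not} the induction itself but pinning down the precise length-versus-fence-count inequality for the infinite family of metatiles: one needs to know exactly which $(\ell,j)$ pairs occur, or at least that they all satisfy both $\ell\ge 2j$-type lower bounds (so the shift decreases $n-(m-1)k$) and $\ell\le mj$-type upper bounds (so the first argument doesn't become a large positive number outside the induction region). Establishing this cleanly probably requires a lemma describing the metatiles for general $m$ — analogous to the explicit description $FSF^{j-1}S$ given for $m=3$ in \S\ref{s:meta} — which the paper has only worked out for $m\le3$, and this is presumably exactly why the statement is left as a conjecture rather than a theorem.
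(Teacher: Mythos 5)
This statement is left as an open conjecture in the paper --- the authors prove it only for $m=3$ (the corollary to Identity~\ref{I:rr3}) and state that it has been verified for $m\le 3$ --- so there is no proof of record to compare against, and what you have written is a strategy rather than a proof. The strategy itself (condition on the final metatile, telescope against the $(n,k)\mapsto(n-1,k-1)$ shift to isolate the Pascal operator $P$, then induct on $k$ within the region $n\ge(m-1)k+1$) is the natural generalization of the $m=3$ argument, and you correctly identify where it breaks down. But the gap is worse than ``needs a lemma to be pinned down'': the structural fact you propose as essential is false. For $m=4$, take $(1,3)$-fences with left posts at cells $1$, $4$, $7$ and squares at cells $2,3,6,9,10$; this covers cells $1$--$11$ with no internal split point, so it is a single metatile containing $5>m$ squares ($3$ fences, $8$ tiles). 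Metatiles with arbitrarily many squares exist for every $m\ge4$, since each newly interlocked fence opens a gap of width $m-1\ge3$ that can absorb several squares while still leaving room to interlock again. So ``at most $m$ squares per metatile'' cannot be the engine of the induction.

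The inequality actually needed is different and more delicate. Writing the metatile recursion in tile count $p$ (not board length --- $\tchb{n}{k}_m$ is indexed by number of tiles, a conflation in your displayed recursion), each surviving term $P(n-a,k-b)$ must satisfy $b\ge1$ and $a\le(m-1)b$ for the region $n\ge(m-1)k+1$ to be preserved. The raw metatile terms do not all satisfy this: the filled fence $FS^{m-1}$ contributes $(p,j)=(m,1)$ and $m\not\le m-1$. In the $m=3$ proof this term is rescued only because the telescoping regroups it with a piece of the shifted sum into $-P(n-2,k-1)$, whose effective shift $(2,1)$ does satisfy $2\le(m-1)\cdot1$. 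Whether an analogous regrouping works for $m\ge4$ depends on the exact multiset of $(p,j)$ pairs over all metatiles and on how the several distinct infinite families (the ``frontier'' of a partially built metatile is a multi-state system for $m\ge4$, not the single chain $FSF^{j-1}S$ of the $m=3$ case) cancel under the $(1,1)$ shift. None of this is established, and it is precisely the content of the conjecture; your proposal reduces the problem to an unproved (and, as stated, false) combinatorial claim about metatiles rather than resolving it.
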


\section{Tiling triangles and Riordan arrays}\label{s:rio}

It is well known that an array is a Riordan array if and only if there
is an $A$-sequence as given by \eqref{e:Aseq} \cite{Rog78}. The
problem with such a recursion relation in tiling triangle applications
is that the expression for a given element is in terms of elements
only in the row above whereas the most readily obtained recursion
relations for tiling triangles tend to be higher than first order in
the row number $n$. In such cases, the following more general
characterization of Riordan arrays (using an `$A$-matrix' rather than
an $A$-sequence) is needed to show whether or not a triangle (or its
row-reversed version) is a Riordan array.

\begin{thm}[Theorem 2.5 of \cite{MRSV97}]\label{T:Amat}
A matrix $\{R_{n,k}\}_{n,k\in\Nset}$, where $\Nset=\{0,1,\ldots\}$, is
a Riordan array if and only if there is another matrix
$\{A_{i,j}\}_{i,j\in\Nset}$ with $A_{0,0}\neq0$ such that every
$R_{n,k}$ for $n,k\ge1$ can be expressed as
$R_{n,k}=\sum_{i\ge1}\sum_{j\ge-1}A_{i,j}R_{n-i,k+j}$.
\end{thm}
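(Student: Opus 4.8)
The plan is to prove the two directions separately, with essentially all of the content in the ``if'' direction; the argument I would give is close to the one in \cite{MRSV97}. The ``only if'' direction is immediate from the $A$-sequence characterization recalled around \eqref{e:Aseq}: a Riordan array satisfies $(p,q)_{n,k}=\sum_{j\ge0}A_j(p,q)_{n-1,k-1+j}$ with $A_0\ne0$, so setting $A_{1,j}:=A_{j+1}$ for $j\ge-1$ and $A_{i,j}:=0$ for $i\ge2$ (the $A$-matrix concentrated on its first row) reproduces exactly the relation $R_{n,k}=\sum_{i\ge1}\sum_{j\ge-1}A_{i,j}R_{n-i,k+j}$, with the leading entry $A_{1,-1}=A_0\ne0$ (written $A_{0,0}$ in the statement).

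For the ``if'' direction I would take $\{R_{n,k}\}$ to be an infinite lower-triangular array with $R_{0,0}\ne0$ obeying the relation for all $n,k\ge1$; specializing to $k=n$ gives $R_{n,n}=A_{1,-1}R_{n-1,n-1}$, so the diagonal is automatically nonzero. Pass to the column generating functions $c_k(x)=\sum_{n\ge0}R_{n,k}x^n$ and to the series $\alpha_j(x)=\sum_{i\ge1}A_{i,j}x^i$ (so $\alpha_j(0)=0$). Multiplying the recurrence by $x^n$, summing over $n\ge1$, and using lower-triangularity to extend the inner sums down to index $0$ turns it into the column identity $c_k(x)=\sum_{j\ge-1}\alpha_j(x)c_{k+j}(x)$ for $k\ge1$, where the $j=-1$, $k=1$ case is harmless because the term it contributes is $\alpha_{-1}(x)c_0(x)$, not a nonexistent column. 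Next, let $h(x)$ be the unique formal power series with $h(0)=0$ solving
\[
h(x)=\sum_{j\ge-1}\alpha_j(x)\,h(x)^{j+1};
\]
existence and uniqueness follow from the formal implicit function theorem (the partial derivative in $h$ at the origin is $1-\alpha_0(0)=1$), and since $\alpha_{-1}(x)=A_{1,-1}x+O(x^2)$ one gets $h(x)=A_{1,-1}x+O(x^2)$, a legitimate Riordan second component thanks to $A_{1,-1}\ne0$.

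The final step identifies $\{R_{n,k}\}$ with the genuine Riordan array $(c_0(x),h(x))$, whose $(n,k)$-entry is $R'_{n,k}:=[x^n]c_0(x)h(x)^k$. This $R'$ is lower-triangular with nonzero diagonal, has the same $0$th column as $R$, and also satisfies the $A$-matrix relation: for $n,k\ge1$,
\[
\sum_{i\ge1}\sum_{j\ge-1}A_{i,j}R'_{n-i,k+j}=[x^n]\sum_{j\ge-1}\alpha_j(x)c_0(x)h(x)^{k+j}=[x^n]c_0(x)h(x)^{k}=R'_{n,k},
\]
the middle equality being the functional equation for $h$ multiplied through by $c_0(x)h(x)^{k-1}$. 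Since for $k\ge1$ the $A$-matrix relation expresses $R_{n,k}$ purely in terms of strictly earlier rows, and $R$, $R'$ agree on the $0$th column and (trivially) above the diagonal, an induction on the row index $n$ yields $R_{n,k}=R'_{n,k}$ for all $n,k$; hence $R$ is the Riordan array $(c_0(x),h(x))$.

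The part I expect to be delicate is the low-index bookkeeping --- matching the entrywise relation with the column identity near $k=1$ and small $n$, and verifying that the functional equation determines $h$ uniquely among series without constant term --- together with the structural point that the column identity $c_k=\sum_{j\ge-1}\alpha_j c_{k+j}$ is \emph{forward}-looking in $k$ (it involves $c_{k+1},c_{k+2},\dots$) and so cannot be iterated directly; the way around this is to build $h$ from the functional equation first and only then match $R$ against $(c_0,h)$ by induction on $n$, which works precisely because the $A$-matrix relation is \emph{backward}-looking in $n$.
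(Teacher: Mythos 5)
The paper does not prove this statement at all: it is imported verbatim as Theorem~2.5 of \cite{MRSV97} and used as a black box, so there is no in-paper argument to compare yours against. Judged on its own, your proof is essentially the original generating-function argument of Merlini--Rogers--Sprugnoli--Verri and it is correct. The ``only if'' direction via the $A$-sequence of \cite{Rog78} (recalled around \eqref{e:Aseq}), with the $A$-matrix concentrated in the row $i=1$, is fine, and you are right that the nonvanishing hypothesis should read $A_{1,-1}\neq0$ in the paper's indexing --- the displayed condition ``$A_{0,0}\neq0$'' is an artifact of transcribing the $a_{i,j}$-indexing of \cite{MRSV97}, where the relation is written $d_{n+1,k+1}=\sum_{i,j\ge0}a_{i,j}d_{n-i,k+j}$, without shifting the subscript of the leading entry. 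For the ``if'' direction, the two points you flag as delicate are exactly the ones that matter, and you resolve both correctly: the column identity $c_k=\sum_{j\ge-1}\alpha_jc_{k+j}$ cannot be iterated in $k$, so one must first manufacture $h$ from the fixed-point equation $h=\sum_{j\ge-1}\alpha_jh^{j+1}$ (well posed as a formal series since $\operatorname{ord}\alpha_j\ge1$ makes the sum summable and the implicit function theorem applies, with $\operatorname{ord}h=1$ because $A_{1,-1}\neq0$), and then compare $R$ with $(c_0,h)$ by induction on the row index, which succeeds because the $A$-matrix relation only reaches into strictly earlier rows and at most one column to the left. The only caveat worth recording is that the ``if'' direction needs the standing hypotheses of \cite{MRSV97} --- $R$ infinite lower triangular with $R_{0,0}\neq0$ --- which the paper's terse quotation suppresses; you state them explicitly, which is the correct reading, since without them the equivalence as literally printed is false (the zero matrix satisfies the recurrence).
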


Notice that the above theorem means that an element in a Riordan array 
can depend on any elements above it as long as they are
not more than 1 column before it.

\begin{remark}\label{R:a>=b}
 As the recursion relation for
elements $T(n,k)$ of tiling triangles are constructed
by conditioning on the final metatile (as in the proof of
Identity~\ref{I:rr3}), such recursion relations will only involve
terms $T(n-a,k-b)$ where $a,b\in\Nset$. Furthermore, the tiling
triangle will only be a lower triangular matrix (and hence it or its
row-reversed version a candidate for a Riordan array) if $a\geq
b$. Recursion relations with terms having $b>a$ can occur if $k$
counts tiles whose sub-tiles have a total length of less than 1.  
\end{remark}

\begin{thm}\label{T:rio}
Suppose a triangle is constructed by letting the $(n,k)$-th entry be
the number of ways to tile an $n$-board that use $k$ special
tiles. The triangle is a Riordan array if and only if there is a
metatile of length 1 that contains exactly one special tile and there
is no metatile that contains more than one special tile.  The triangle
is a row-reversed Riordan array if and only if there is a metatile of
length 1 that lacks special tiles and for all metatiles $l-s$ is 0 or 1,
where $l$ is the length of the metatile and $s$ is the number of
special tiles it contains.
\end{thm}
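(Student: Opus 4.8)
The plan is to prove the two biconditionals in Theorem~\ref{T:rio} by combining the metatile-conditioning recursion from the proof of Theorem~\ref{T:gpoly} (and Identity~\ref{I:rr3}) with the $A$-matrix characterization of Theorem~\ref{T:Amat}, keeping in mind the structural constraints noted in Remark~\ref{R:a>=b}. Let $T(n,k)$ be the $(n,k)$-th entry of the triangle, so that conditioning on the final metatile gives $T(n,k)=\delta_{n,0}\delta_{k,0}+\sum_{i=1}^{M}T(n-l_i,k-s_i)$, where the $i$-th metatile has length $l_i\ge1$ and contains $s_i\ge0$ special tiles. The key observation is that this is \emph{almost} of the $A$-matrix form $T(n,k)=\sum_{i\ge1}\sum_{j\ge-1}A_{i,j}T(n-i,k+j)$, the discrepancies being (a) the Kronecker term, which only affects $n=0$ and is harmless since Theorem~\ref{T:Amat} only constrains $R_{n,k}$ for $n,k\ge1$, and (b) the requirement $A_{0,0}\ne0$, which forces the existence of a metatile with $l=1,s=0$ (for the row-reversed case) or $l=1,s=1$ (for the direct case), together with the column-displacement restriction $j\ge-1$.

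For the \textbf{Riordan array} direction, I would argue as follows. If there is a length-$1$ metatile with exactly one special tile and no metatile has more than one special tile, then every metatile has $s_i\in\{0,1\}$, so writing $i=l_i$ and $j=s_i-1\in\{-1,0\}$ (noting $j\ge-1$) gives $T(n,k)=\sum_{i\ge1,\,j\ge-1}A_{i,j}T(n-i,k+j)$ with $A_{i,j}$ counting the metatiles of length $i$ with $j+1$ special tiles, and $A_{0,0}\ne0$ because no such term arises (metatiles have $l\ge1$) — wait, more carefully, the $A$-matrix entry at $(i,j)$ corresponds to $l=i$, $s=j+1$; a length-$1$ metatile with one special tile contributes to $A_{1,0}$, not $A_{0,0}$. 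The correct normalization comes from first \emph{solving} the length-$1$, one-special-tile metatile out of the recursion: isolating that term on the left rewrites the recursion so that the coefficient of $T(n-0,k-0)$ on a suitably rearranged equation becomes nonzero; alternatively, and more cleanly, I would invoke that a length-$1$ single-special-tile metatile means the $0$-th column of the triangle is $1,0,0,\ldots$ while the diagonal grows, and verify directly that $T(0,0)=1$ and that the $A$-sequence form \eqref{e:Aseq} applies when \emph{all} metatiles have $l-s\in\{0,\ldots\}$ with the single-special-tile constraint — I would follow the template of Theorem~\ref{T:R3A}, identifying $p$ from the $0$-th column and $q$ from the rearranged recursion. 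Conversely, if the triangle is a Riordan array, its $0$-th column is the series for $p(x)$ with $p(0)=T(0,0)=1$ and $T(n,0)$ for $n\ge1$ determined by metatiles with $s=0$; and an $A$-sequence must exist, which after translating back into metatile language forces (i) a length-$1$ metatile producing the $T(n-1,k-1)$ term needed to anchor the diagonal, which must be a single special tile, and (ii) the absence of terms $T(n-i,k+j)$ with $j<-1$, i.e. no metatile with $s-l\ge2$; but since every metatile already has $l\ge1$, the obstruction is a metatile with two or more special tiles and $l$ too small, and I would rule these out by a column-index count showing they would make the matrix non-lower-triangular or violate the $A$-sequence shift bound.

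For the \textbf{row-reversed Riordan array} direction, recall from the paragraph before Theorem~\ref{T:R3A} (and from the proof of Theorem~\ref{T:R3A}) that the row-reversed triangle has $(n,k)$-th entry $T(n,n-k)$, obtained by the substitution $k\mapsto n-k$. Applying this substitution to the metatile recursion, a term $T(n-l_i,k-s_i)$ becomes $T(n-l_i,(n-k)-(l_i-s_i))$, i.e. in the row-reversed matrix the column displacement is $j=-(l_i-s_i)$ and the row displacement is $i=l_i$. The $A$-matrix requirement $j\ge-1$ then reads $l_i-s_i\le1$, and combined with $l_i\ge s_i\ge0$ (Remark~\ref{R:a>=b}) this is exactly $l_i-s_i\in\{0,1\}$; the normalization $A_{0,0}\ne0$ requires a metatile with $l_i=s_i$ \emph{and} $l_i=1$ that contributes to the $(0,0)$ slot after solving it out — but in fact the cleaner statement is that we need a length-$1$ metatile with $l-s=0$, i.e. a length-$1$ metatile that is itself a special tile?—no: re-examining, $l-s=0$ with $l=1$ means $s=1$, which is not what the theorem says; the theorem says a length-$1$ metatile that \emph{lacks} special tiles. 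The resolution is that for the row-reversed array the $0$-th column of the \emph{un-reversed} array must be $p$, and $p(0)=1$ forces a tiling of the $0$-board, while the row-reversed $A_{0,0}$ entry corresponds to $i=0$, $j=0$, i.e. $l=0$, $s=0$ — impossible for a genuine metatile — so instead one uses the length-$1$ no-special-tile metatile (a free square) to supply, after row-reversal, a $T(n-1,k)$ term (since $l-s=1$ gives $j=-1$... ) combined with some $l-s=0$ metatile giving the main-diagonal anchor $j=0$. I would therefore carefully tabulate, for each $(l,s)$ with $l-s\in\{0,1\}$, the resulting $(i,j)$ in the row-reversed matrix, check that $j\in\{-1,0\}\subseteq\{-1,0,1,\ldots\}$ always, and pin down which metatile supplies the nonzero $A_{0,0}$; the free square (with $l=1,s=0$, hence $i=1,j=-1$) together with the need for $T(0,0)=1$ does the job, and I would spell out the argument in the style of the $A$-sequence computation in Theorem~\ref{T:R3A}.

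The main obstacle I anticipate is \textbf{bookkeeping the index translations correctly}, especially the $A_{0,0}\ne0$ normalization: the raw metatile recursion never literally produces an $A_{0,0}T(n,k)$ term (all metatiles have $l\ge1$), so to apply Theorem~\ref{T:Amat} one must either rearrange the recursion by moving the shortest single-special-tile (resp. square) metatile's contribution to the left-hand side and dividing, or — preferably — note that Theorem~\ref{T:Amat}'s matrix $A$ is only required to \emph{exist}, and one can always enlarge/renormalize, so the real content is the \emph{shift constraint} $j\ge-1$ (equivalently $l-s\le1$ in the direct case after the $k\mapsto$ special-count reading, or $l-s\le1$ in the row-reversed case) plus enough non-degeneracy to reconstruct $p,q$. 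The second subtlety is proving the \emph{converse} directions: from "is a Riordan array" one must extract metatile constraints, which requires knowing that the metatile recursion is not just \emph{a} valid recursion but essentially \emph{the} defining one, so that any $A$-matrix for the triangle is compatible with it; here I would lean on uniqueness of the $A$-matrix/$A$-sequence up to the known freedoms, and on the fact established in Theorem~\ref{T:gpoly} that the metatile recursion already determines every entry from the boundary. I would also explicitly flag, as the theorem's hypotheses implicitly require and Remark~\ref{R:a>=b} makes precise, that we restrict to the "honest" tiling setting where $l\ge s$ for every metatile, since otherwise the row-reversed/direct dichotomy and the lower-triangularity both break.
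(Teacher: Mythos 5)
Your overall route is the same as the paper's: condition on the final metatile to get $T(n,k)=\delta_{n,0}\delta_{k,0}+\sum_iT(n-l_i,k-s_i)$, pass to $\bar{T}(n,k)=\delta_{n,0}\delta_{k,0}+\sum_i\bar{T}(n-l_i,k-(l_i-s_i))$ for the row-reversed version, and read both conditions off the $A$-matrix characterization of Theorem~\ref{T:Amat}. Your final conclusions in both cases agree with the paper. However, your writeup contains an index error in the direct case that is the sole source of all the confusion you flag about $A_{0,0}$, and it needs to be fixed rather than worked around. Matching $T(n-l_i,k-s_i)$ against $R_{n-i,k+j}$ gives $i=l_i$ and $j=-s_i$, not $j=s_i-1$. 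With the correct identification, the shift constraint $j\ge-1$ is literally $s_i\le1$ (no metatile with more than one special tile), and the metatile with $l=s=1$ contributes the term at $(i,j)=(1,-1)$, i.e., the coefficient of $T(n-1,k-1)$ --- which is precisely the ``corner'' entry whose nonvanishing Theorem~\ref{T:Amat} demands (the paper calls it $A_{0,0}$, following the original indexing of Merlini et al., even though the sum as displayed runs over $i\ge1$, $j\ge-1$). Nothing needs to be ``solved out'' of the recursion and no appeal to the template of Theorem~\ref{T:R3A} is needed; your proposed detours there stem entirely from the wrong offset $j=s_i-1$. The same correct identification in the row-reversed case, $j=-(l_i-s_i)$, is the one you do eventually land on, and it makes the free square ($l=1$, $s=0$) supply the $(1,-1)$ term, exactly as the theorem states.

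One further remark: you are right to be uneasy about the converse directions (extracting metatile constraints from the mere existence of \emph{some} $A$-matrix), but the paper's own proof is no more detailed on this point than yours --- it simply treats the metatile recursion as the relevant one and reads the conditions off it. So this is a shared level of rigor rather than a gap specific to your argument.
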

\begin{proof}
If $T(n,k)$ is the $(n,k)$-th entry, then conditioning on the final
metatile gives
\begin{equation}\label{e:gentri} 
T(n,k)=\delta_{n,0}\delta_{k,0}+\sum_iT(n-l_i,k-s_i),
\end{equation} 
where $l_i$ is the length of the $i$-th metatile and $s_i$ is the
number of special tiles it contains.  In order that $T(n,k)$ meets the
condition to be a Riordan array, we require at least one $i$ with
$l_i=s_i=1$ (so that $A_{0,0}\neq0$ in Theorem~\ref{T:Amat})
and $s_i$ cannot exceed 1. Let $\bar{T}(n,k)$ be the $(n,k)$-th entry
in the row-reversed triangle (reversing entries in each row up to and
including the main diagonal). Replacing $T(a,b)$ by
$\bar{T}(a,a-b)$ in \eqref{e:gentri} and then replacing $n-k$ by $k$
leaves
\begin{equation}\label{e:rrgentri} 
\bar{T}(n,k)=\delta_{n,0}\delta_{k,0}+\sum_i\bar{T}(n-l_i,k-(l_i-s_i)). 
\end{equation} 
The reversed triangle is a Riordan array if and only if there is an
$i$ such that $l_i=1$ and $s_i=0$ (i.e., there is a metatile of length
1 that lacks special tiles) and if $l_i-s_i$ is 0 or 1 for all $i$
(since $l_i-s_i$ cannot exceed 1 by Theorem~\ref{T:Amat} and cannot be
negative as discussed in Remark~\ref{R:a>=b}). 
\end{proof}

\begin{example}
When tiling an $n$-board with half-squares (i.e., $\frac12\times1$ tiles always
placed with the shorter sides horizontal) and
$(\frac12,\frac12)$-fences, all metatiles contain not more than 2
half-squares \cite{EA19}. If the fence is regarded as the special
tile, the associated triangle (\seqnum{A123521}) is a row-reversed
Riordan array since two half-squares make a metatile of length 1 and
since, for this type of tiling, $l_i-s_i$ equates to the total length
of the half-squares in the $i$-th metatile, it is either 0 or 1. The
triangle is in fact the $(1/(1-x^2),x/(1-x)^2)$ row-reversed Riordan
array \cite{EA20}.
\end{example}

\begin{thm}\label{T:rioT}
Suppose a triangle is constructed by letting the $(n,k)$-th entry be
the number of $n$-tile tilings of a board that use $k$ special
tiles. The triangle is a Riordan array if and only if the
special tiles are metatiles consisting of just one tile and there is no
metatile that contains more than one special tile. The triangle is a
row-reversed Riordan array if and only if there is a metatile
consisting of a single tile which is not a special tile and
no metatile contains more than one non-special tile.
\end{thm}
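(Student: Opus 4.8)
The plan is to adapt the proof of Theorem~\ref{T:rio}, replacing metatile \emph{length} (number of cells covered) by metatile \emph{size} (number of tiles), which is precisely the substitution flagged in the remark following Theorem~\ref{T:gpoly}. Write $T(n,k)$ for the $(n,k)$-th entry, the number of $n$-tile tilings using $k$ special tiles, with the convention $T(n,k)=0$ when $n<0$ or $k<0$. Conditioning on the final metatile gives
\[
T(n,k)=\delta_{n,0}\delta_{k,0}+\sum_i T(n-t_i,k-s_i),
\]
where $t_i\ge1$ is the number of tiles in the $i$-th metatile and $s_i\ge0$ the number of special tiles it contains. For $n,k\ge1$ the Kronecker term drops out, leaving a constant-coefficient recursion of the form needed for Theorem~\ref{T:Amat}; if there are infinitely many metatiles the sum is still finite for each fixed $(n,k)$ since only metatiles with $t_i\le n$ contribute.

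For the Riordan-array part, I would compare this recursion with $R_{n,k}=\sum_{i\ge1}\sum_{j\ge-1}A_{i,j}R_{n-i,k+j}$ from Theorem~\ref{T:Amat}: the summand $T(n-t_i,k-s_i)$ sits at row offset $i=t_i$ and column offset $j=-s_i$, so the condition $i\ge1$ is automatic, the condition $j\ge-1$ becomes $s_i\le1$ for every metatile, and (exactly as in the proof of Theorem~\ref{T:rio}) $A_{0,0}\neq0$ corresponds to a metatile with $t_i=s_i=1$, i.e.\ a special tile that is itself a one-tile metatile. As Theorem~\ref{T:Amat} is an equivalence, these are exactly the stated conditions.

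For the row-reversed part, let $\bar T(n,k)=T(n,n-k)$; substituting $T(a,b)=\bar T(a,a-b)$ into the recursion and relabelling $n-k$ as $k$, just as in the proof of Theorem~\ref{T:rio}, gives
\[
\bar T(n,k)=\delta_{n,0}\delta_{k,0}+\sum_i\bar T\!\bigl(n-t_i,\,k-(t_i-s_i)\bigr),
\]
where $t_i-s_i\ge0$ is the number of non-special tiles in the $i$-th metatile. Applying Theorem~\ref{T:Amat} once more, the condition $j\ge-1$ now reads $t_i-s_i\le1$ for all metatiles (the reverse inequality $t_i-s_i\ge0$ being automatic, consistent with Remark~\ref{R:a>=b}), while $A_{0,0}\neq0$ corresponds to a metatile with $t_i=1$, $s_i=0$, i.e.\ a one-tile metatile that is not special. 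These are the two conditions in the statement.

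The whole argument is essentially bookkeeping once Theorem~\ref{T:Amat} is invoked; the only point requiring care is the translation dictionary---matching ``$A_{0,0}\neq0$'' to the correct offset and confirming, via the nonnegativity of the number of (non-)special tiles in a metatile, that all column offsets land in $\{0,-1\}$ so that no term reaches more than one column to the left. I would not expect any difficulty beyond what already occurs in the proof of Theorem~\ref{T:rio}.
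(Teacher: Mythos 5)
Your proposal is correct and follows essentially the same route as the paper's proof: condition on the final metatile to get $T(n,k)=\delta_{n,0}\delta_{k,0}+\sum_i T(n-p_i,k-s_i)$ with $p_i$ the tile count, apply Theorem~\ref{T:Amat} to read off the conditions $s_i\le1$ with some $p_i=s_i=1$, and then perform the row-reversal substitution $\bar T(n,k)=T(n,n-k)$ to get the analogous conditions on $p_i-s_i$. The only difference is notational ($t_i$ versus the paper's $p_i$), and your extra remarks on finiteness of the sum and the nonnegativity of the column offsets are consistent with Remark~\ref{R:a>=b}.
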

\begin{proof}
If $T(n,k)$ is the $(n,k)$-th entry, then conditioning on the final
metatile gives
\begin{equation}\label{e:gentriT} 
T(n,k)=\delta_{n,0}\delta_{k,0}+\sum_iT(n-p_i,k-s_i),
\end{equation} 
where $p_i$ is the number of tiles contained in the $i$-th metatile and
$s_i$ is the number of special tiles it contains.  For the triangle
to be a Riordan array, we
require at least one $i$ with $p_i=s_i=1$ (which means the special
tiles are themselves metatiles) and $s_i$ cannot exceed 1 (which means
no metatile can contain more than one special tile). Let
$\bar{T}(n,k)$ be the $(n,k)$-th entry in the row-reversed triangle.
Then
\begin{equation}\label{e:rrgentriT} 
\bar{T}(n,k)=\delta_{n,0}\delta_{k,0}+\sum_i\bar{T}(n-p_i,k-(p_i-s_i)). 
\end{equation} 
The reversed triangle is a Riordan array if and only if there is an
$i$ such that $p_i=1$ and $s_i=0$ (i.e., there is a metatile composed
of a single non-special tile) and if $p_i-s_i$ is 1 or 0 for all $i$
(for reasons given in the proof of Theorem~\ref{T:rio}). 
\end{proof}

In some instances, every other row of a tiling triangle or its
row-reversed version are Riordan arrays \cite{EA21}. Analogous theorems can
be applied to test for this.

\begin{remark}
If a tiling triangle or its row-reversed version is a $(p,q)$ Riordan
array, then determining the functions $p$ and $q$ is
straightforward. If it turns out that the tiling triangle is a Riordan
array (row-reversed Riordan array) then $p$ is the generating function
for tilings with no (with only) special tiles. To find $q(x)$, each term
$T(n-a,k-b)$ (or $\bar{T}(n-a,k-b)$) in the recursion relation 
is replaced by $x^apq^b$ \cite{EA21}. As $b$ is either 0 or 1, 
dividing by $pq^{k-1}$ gives an equation which is linear in $q$.
\end{remark}

We end by giving a corollary of Theorem~\ref{T:rioT} applied to the
$n$-tile tiling family of triangles. 
\begin{cor}
The $\tchb{n}{k}_m$ triangles for $m>2$ are not
row-reversed Riordan arrays.
\end{cor}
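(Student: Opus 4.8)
The plan is to invoke Theorem~\ref{T:rioT}, which characterizes exactly when an $n$-tile tiling triangle is a row-reversed Riordan array: there must be a metatile consisting of a single tile that is \emph{not} special, and no metatile may contain more than one non-special tile. Here the special tiles are the $(1,m-1)$-fences, so the non-special tiles are the unit squares. First I would check the first condition: is there a metatile consisting of a single square? From \S\ref{s:meta}, the free square $S$ is a metatile and it consists of exactly one tile, which is a (non-special) square, so this condition is met for every $m\ge1$. Hence the obstruction, if any, must come from the second condition.

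Next I would exhibit, for each $m>2$, a metatile containing at least two squares. The natural candidate is the filled fence $FS^{m-1}$ from \S\ref{s:meta}: it is a single $(1,m-1)$-fence with its gap of width $m-1$ filled by $m-1$ squares, and it is genuinely a metatile (it covers $m+1$ cells and cannot be split). For $m>2$ we have $m-1\ge2$, so this metatile contains $m-1\ge2$ non-special tiles, violating the requirement that no metatile contain more than one non-special tile. By Theorem~\ref{T:rioT}, this forces the $\tchb{n}{k}_m$ triangle to fail to be a row-reversed Riordan array for every $m>2$.

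I expect the proof to be essentially a two-line application of Theorem~\ref{T:rioT}, so there is no real obstacle; the only thing to be careful about is making sure the cited metatile is legitimate. In particular I would double-check that $FS^{m-1}$ really is indivisible — i.e., that the block of $m-1$ squares sitting inside the fence's gap cannot be peeled off as a separate metatile — which is immediate since those squares lie strictly between the two posts of the fence and so cannot form a tiling of an initial segment of cells on their own; they are locked in by the fence. One could alternatively note that the $m$-fence $F^m$ is another metatile for $m>2$ that fails the condition in a different way (it contains no squares, which is fine, but it shows the structure is genuinely richer than in the $m=1,2$ cases), but the filled fence is the cleanest witness. It is also worth remarking that this corollary is consistent with, but strictly weaker than, the fact established implicitly in \S\ref{s:fam1}--\S\ref{s:fam2} that the two families differ for $m>2$: here we are only asserting non-membership in the row-reversed Riordan class, which already follows structurally from the presence of a square-rich metatile.

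\begin{proof}
By Theorem~\ref{T:rioT}, with the $(1,m-1)$-fences as the special tiles, the $\tchb{n}{k}_m$ triangle is a row-reversed Riordan array if and only if there is a metatile consisting of a single non-special tile and no metatile contains more than one non-special tile. The first condition holds since the free square $S$ is such a metatile. However, for $m>2$ the filled fence $FS^{m-1}$ described in \S\ref{s:meta} is a metatile containing $m-1\ge2$ squares, i.e., $m-1\ge2$ non-special tiles. Hence the second condition fails, and the triangle is not a row-reversed Riordan array.
\end{proof}
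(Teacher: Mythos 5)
Your proof is correct and takes essentially the same route as the paper, which likewise applies Theorem~\ref{T:rioT} and cites the filled fence $FS^{m-1}$ as the metatile containing $m-1\ge2$ squares for $m>2$. Your additional check that the free square $S$ satisfies the first condition is a harmless (and sensible) elaboration of what the paper leaves implicit.
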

\begin{proof}
When $m>2$ there is at least one metatile containing 2 squares
(namely, the filled fence $FS^{m-1}$).
\end{proof}

\section{Discussion}

Further generalizations of the tiling triangles presented here are
possible if one tiles with squares and combs (which are
generalizations of fences that can have more than two sub-tiles
\cite{AE-GenFibSqr}). This will be the subject of a future article. 

Just as tiling an $n$-board with $(\frac12,g)$-fences turned out to be
a natural way to envisage permutations of $\{1,2,\ldots,n\}$
\cite{EA15}, we have shown here that a natural representation of the
subsets of $\{1,2,\ldots,n\}$ where no two elements differ by $m$ is
the tilings of an $(n+m)$-board using squares and $(1,m-1)$-fences.
This representation can be extended to the case where there are
multiple disallowed differences of the subset elements by tiling with
squares and combs, but we will address this elsewhere.

We have given conditions for a tiling triangle or its row-reversed
version to be a Riordan array. An open question is whether it is
possible to find a tiling interpretation of an arbitrary (possibly
row-reversed) Riordan array. We have been unable to do so for the
first family of triangles we considered here for $m>2$.


\begin{thebibliography}{10}
\providecommand{\url}[1]{\texttt{#1}}
\providecommand{\urlprefix}{URL }

\bibitem{AE-GenFibSqr}
M.~A. Allen and K.~Edwards, Connections between two classes of generalized
  {Fibonacci} numbers squared and permanents of (0,1) {Toeplitz} matrices
  (2021), http://arxiv.org/abs/2107.02589.

\bibitem{Bar=16}
P.~Barry, \emph{Riordan Arrays: A Primer}, Logic Press, Kilcock, 2016.

\bibitem{BQ=03}
A.~T. Benjamin and J.~J. Quinn, \emph{Proofs That Really Count: The Art of
  Combinatorial Proof}, Mathematical Association of America, Washington, 2003.

\bibitem{DEM21}
I.~Dababneh, M.~Elmer, and R.~McCulloch, A golden {Penney} game, \emph{J.
  Integer Seq.} \textbf{24} (2021), \href{https://cs.uwaterloo.ca/journals/JIS/VOL24/McCulloch/mcc3.html}{Article 21.6.2}.

\bibitem{Edw08}
K.~Edwards, A {Pascal}-like triangle related to the tribonacci numbers,
  \emph{Fibonacci Quart.} \textbf{46/47} (2008/2009), 18--25.

\bibitem{EA15}
K.~Edwards and M.~A. Allen, Strongly restricted permutations and tiling with
  fences, \emph{Discrete Appl. Math.} \textbf{187} (2015), 82--90.

\bibitem{EA19}
K.~Edwards and M.~A. Allen, A new combinatorial interpretation of the
  {Fibonacci} numbers squared, \emph{Fibonacci Quart.} \textbf{57} (5)
  (2019), 48--53.

\bibitem{EA20a}
K.~Edwards and M.~A. Allen, A new combinatorial interpretation of the
  {Fibonacci} numbers cubed, \emph{Fibonacci Quart.} \textbf{58} (5) (2020),
  128--134.

\bibitem{EA20}
K.~Edwards and M.~A. Allen, A new combinatorial interpretation of the
  {Fibonacci} numbers squared. {Part II}., \emph{Fibonacci Quart.} \textbf{58}
  (2020), 169--177.

\bibitem{EA21}
K.~Edwards and M.~A. Allen, New combinatorial interpretations of the
  {Fibonacci} numbers squared, golden rectangle numbers, and {Jacobsthal}
  numbers using two types of tile, \emph{J. Integer Seq.} \textbf{24} (2021),
  \href{https://cs.uwaterloo.ca/journals/JIS/VOL24/Allen/edwards2.html}{Article 21.3.8}.

\bibitem{Kap43}
I.~Kaplansky, Solution of the ``probl{\`e}me des m{\'e}nages'', \emph{Bull. Am.
  Math. Soc.} \textbf{49} (1943), 784--785.

\bibitem{KL91c}
J.~Konvalina and Y.-H. Liu, Subsets without $q$-separation and binomial
  products of {Fibonacci} numbers, \emph{J. Combin. Theor. A} \textbf{57}
  (1991), 306--310.

\bibitem{MRSV97}
D.~Merlini, D.~G. Rogers, R.~Sprugnoli, and M.~C. Verri, On some alternative
  characterizations of {Riordan} arrays, \emph{Can. J. Math.} \textbf{49}
  (1997), 301--320.

\bibitem{Pro83}
H.~Prodinger, On the number of combinations without a fixed distance, \emph{J.
  Combin. Theor. A} \textbf{35} (1983), 362--365.

\bibitem{Rog78}
D.~G. Rogers, {Pascal} triangles, {Catalan} numbers and renewal arrays,
  \emph{Discrete Math.} \textbf{22} (1978), 301--310.

\bibitem{SGWW91}
L.~W. Shapiro, S.~Getu, W.-J. Woan, and L.~C. Woodson, The {Riordan} group,
  \emph{Discrete Appl. Math.} \textbf{34} (1991), 229--239.

\bibitem{Slo-OEIS}
N.~J.~A. Sloane, \emph{The Online Encyclopedia of Integer Sequences}, Published
  electronically at https://oeis.org, 2010.

\bibitem{Spr94}
R.~Sprugnoli, Riordan arrays and combinatorial sums, \emph{Discrete Math.}
  \textbf{132} (1994), 267--290.

\end{thebibliography}

\bigskip
\hrule
\bigskip

\noindent 2010 {\it Mathematics Subject Classification}:
Primary 11B39; Secondary 05A19, 05A15.

\noindent \emph{Keywords}:
combinatorial proof, combinatorial identity, $n$-tiling,
Pascal-like triangle, Riordan array, Fibonacci polynomial,
restricted combination

\bigskip
\hrule
\bigskip

\sloppy
\noindent (Concerned with sequences 
\seqnum{A000045}, 
\seqnum{A000079}, 
\seqnum{A000217}, 
\seqnum{A006324}, 
\seqnum{A006498}, 
\seqnum{A007318}, 
\seqnum{A011973}, 
\seqnum{A059259}, 
\seqnum{A077947}, 
\seqnum{A079962}, 
\seqnum{A115451}, 
\seqnum{A118923}, 
\seqnum{A123521}, 
\seqnum{A157897}, 
\seqnum{A335964}, 
\seqnum{A349839}, 
\seqnum{A349840}, 
\seqnum{A349841},
\seqnum{A349842},
\seqnum{A349843},
\seqnum{A350110}, 
\seqnum{A350111}, 
and \seqnum{A350112}.) 

\bigskip
\hrule
\bigskip

\end{document}